%
%
%
%
%
%
%
%
%
%
%
%
\documentclass{article}
\usepackage{amsmath}
  \usepackage{paralist}
  \usepackage{graphics} 
  \usepackage{epsfig} 
 \usepackage[colorlinks=true]{hyperref}

\usepackage{color}

\definecolor{myred}{rgb}{1,0.5,0}

\hypersetup{urlcolor=blue, citecolor=red}

  \textheight=8.2 true in
   \textwidth=5.0 true in
    \topmargin 30pt
     \setcounter{page}{1}



\newtheorem{thm}{Theorem}[section]
\newtheorem{proof}{Proof}[section]
\newtheorem{lemma}[thm]{Lemma}
\newtheorem{rem}[thm]{Remark}
\newtheorem{corollary}[thm]{Corollary}

\definecolor{myred}{rgb}{1,0.5,0}

\title
      {Neumann boundary feedback stabilization for a
      nonlinear wave equation: A strict $H^2$-Lyapunov function}


\author{
Martin Gugat$^a$,
G\"unter Leugering$^a$
and
Ke Wang$^{a,b}$}

\date{ }

\begin{document}

\maketitle

{\footnotesize \centerline{$^a$Lehrstuhl 2 f\"ur Angewandte
Mathematik} \centerline{Friedrich-Alexander-Universit\"at
Erlangen-N\"urnberg} \centerline{Cauerstr. 11, 91058 Erlangen,
Germany}}

\medskip

{\footnotesize
 \centerline{$^b$School of Mathematical Sciences, Fudan University, Shanghai 200433,
 China}}

{\bf AMS Subject Classifications:}{76N25, 35L51, 35L53, 93C20.}

{\bf Keywords:}{Boundary feedback
control, feedback stabilization, exponential stability, isothermal
Euler equations, second-order quasilinear equation, Lyapunov
function, stationary state, non-stationary state, gas pipeline.}

\thanks{This work was
supported by DFG
in the framework of the Collaborative Research Centre
CRC/Transregio 154,
Mathematical Modelling, Simulation and Optimization Using the Example of Gas Networks,
project C03 and A05.}


\begin{abstract}
For a system that is governed by
the isothermal Euler equations with friction
for ideal gas,
the corresponding  field of characteristic curves
is determined by the velocity of the flow.
This velocity
is determined by
a second-order quasilinear hyperbolic equation.
For the
corresponding
initial-boundary value problem with
Neumann-boundary feedback, we consider non-stationary solutions
locally around a stationary state on a finite time interval and
discuss the well-posedness of this kind of problem. We introduce a
strict $H^2$-Lyapunov function and show that the boundary feedback
constant can be chosen such that the $H^2$-Lyapunov function and
hence also the $H^2$-norm of the difference between the
non-stationary and the stationary state decays exponentially with
time.
\end{abstract}

\section{Introduction}
The flow of gas through a pipeline
is modelled by the isothermal Euler
equations with friction.
In the operation of gas pipelines,
it is essential that the velocities
remain below critical values where
vibrations occur and noise is created, see
\cite{zou}.
We study a  quasilinear wave equation
for the gas velocity
in the case of ideal gas which is derived from the isothermal Euler
equations with friction.
Using Neumann
feedback at one end of the pipe,
 we stabilize the  solution of the corresponding
initial-boundary value problem
with homogeneous Dirichlet boundary conditions at the other end of the pipe
to a desired subsonic stationary state.
Except for its nonlinearity, this system is of a similar form  as the system with the linear
wave equation which has been studied for example in  \cite{ko:rapid}.

The first results
on the boundary feedback stabilization for a quasilinear wave equation
 have been obtained by M. Slemrod in
\cite{Slemrod} and J. Greenberg \& T. Li in \cite{GL}
by using the method of characteristics.
In \cite{CBD1}, J.-M. Coron, B. d'Andrea-Novel \& G. Bastin constructed a strict
$H^2$-Lyapunov function for the boundary control of hyperbolic systems
of conservation laws without source term.
In \cite{CBD2}, they constructed a strict $H^2$-Lyapunov
for quasilinear hyperbolic systems with dissipative boundary conditions
 without source term.
 More recently in \cite{CoronBastin2015}, Coron and Bastin
 study the Lyapunov stability of the $C^1$-norm for quasilinear
 hyperbolic systems of the first order.
 They consider $W^1_p$--Lyapunov functions for $p<\infty$ and
 look at the limit for $p\rightarrow \infty$.



Based upon \cite{CBD1}, M. Dick, M. Gugat \& G. Leugering considered
the isothermal Euler equations with friction
with Dirichlet boundary  feedback  at both ends of the system
 and introduced a strict
$H^1$-Lyapunov function, which is a weighted and squared $H^1$-norm
of the difference between the nonstationary and the stationary
state. They developed Dirichlet boundary feedback conditions which guarantee
that the $H^1$-norm of the difference between the non-stationary and
the stationary state decays exponentially with time (see
\cite{DGL1}).
In \cite{GLTW}, we have defined a  strict
$H^2$-Lyapunov function for this stabilization problem.
In contrast to
\cite{CBD1}, \cite{DGL1} and \cite{GLTW}
 in the present paper
a Neumann boundary feedback law is used at one end of the interval
for the stabilization of the system.
This is motivated by the nice properties
of the corresponding Neumann feedback for the linear wave equation
that leads to finite-time stabilization for a certain
feedback parameter, see  \cite{ko:rapid}, \cite{fatiha}.


In our paper, by constructing a strict
$H^2$-Lyapunov function and choosing suitable boundary feedback
conditions,
we give results about the boundary feedback
stabilization for a second-order quasilinear hyperbolic
equation with source term.
The exponential decay of the solution of a second-order
quasilinear hyperbolic equation is established. This solution
measures the difference between the present state and a desired
stationary state, which is in general not constant for our system.


This paper is organized as follows: In Section \ref{isothermal} we consider the
isothermal Euler equations both in  physical variables and
in terms of Riemann invariants. Then we transform the
isothermal Euler equations to a second-order quasilinear hyperbolic
equation. In Section \ref{well-posed}
we state a result about the well-posedness of
general second-order quasilinear hyperbolic systems on a finite time
interval (see Lemma \ref{well-posedness}).
Our main results about the exponential
decay of the $H^2$-norm and $C^1$-norm are presented in Theorem \ref{thm4.2}
and Corollary \ref{cor4.3} in Section \ref{sec4}. The proofs of Theorem \ref{thm4.2} and
Corollary \ref{cor4.3} are given in Section \ref{sec5}.
The infinite time horizon case is studied in Section \ref{sec6}.
We show that due to the stabilization, the solution exists globally in time.

\section{The isothermal Euler equations and  a quasilinear wave equation}

\label{isothermal}

In this section, we present the isothermal Euler equations with
friction for a single pipe both in terms of the physical variables
and in terms of Riemann invariants.

Let a finite time $T>0$ be given.
The system dynamics for the gas flow in a single pipe can be modeled
by a hyperbolic system, which is described by the isothermal Euler
equations (see \cite{BHK1},\cite{BHK2},\cite{DGL1}):
\begin{align}\label{2.1}
&\rho_t+q_x=0,\\\label{2.2}
&q_t+\Big(\frac{q^2}\rho+a^2\rho\Big)_x=-\frac{f_g}{2\delta}\frac{q|q|}{\rho},
\end{align}
where $\rho=\rho(t,x)>0$ is the density of the gas, $q=q(t,x)$ is
the mass flux, the constant $f_g > 0$ is a friction factor,
$\delta>0$ is the diameter of the pipe and $a > 0$ is the sonic
velocity in the gas. We consider the equations on the domain
$\Omega:=[0,T]\times[0,L]$.
Equation (\ref{2.1}) states the conservation of mass and equation
(\ref{2.2}) is the momentum equation. We use the notation
$$\theta=\frac{f_g}{\delta}.$$
In this paper, we consider positive gas flow in subsonic or
subcritical states, that is,
\begin{align}\label{2.3}
0<\frac{q}\rho < a.
\end{align}

The isothermal Euler equations
(\ref{2.1}) and (\ref{2.2}) give rise to the second-order
equation
\begin{align}\label{2.21}
\tilde u_{tt} + 2\tilde u \, \tilde u_{tx}-(a^2-\tilde u^2)
\, \tilde
u_{xx}=\tilde F(\tilde u,\tilde u_x,\tilde u_t),
\end{align}
where $\tilde u$ is the unknown function
and satisfies
\begin{equation}
\label{2.35}
\tilde u=\frac{q}\rho,
\end{equation}
that is $\tilde u$ is the velocity of the gas.
The lower order term
is
\begin{align}\label{2.22}
\tilde F(\tilde u,\tilde u_x,\tilde u_t)=-2\tilde u_t\tilde
u_x-2\tilde u\tilde u_x^2
-\frac{3}{2}\theta \tilde u
|\tilde u|
\tilde u_x
-\theta |\tilde u| \tilde u_t.
\end{align}
From the velocity $\tilde u$,
the
density  $\rho$ can be obtained from
 the initial value and the differential equation
\begin{align}\label{2.34}
(\ln\rho)_t=\frac{1}{a^2}\, \left(\tilde u\tilde
u_t
+(\tilde u^2-a^2)\, \tilde u_x
+\frac{1}2\theta \, |\tilde u| \tilde u^2\right).
\end{align}
Then $q$ can be obtained from the equation
$q = \rho \, \tilde u$.

To stabilize the system governed by
the quasilinear wave equation (\ref{2.21}) locally around a given
stationary state $\bar u(x)$, we use the boundary feedback law
\begin{align*}
&
\tilde u_x(0) =\bar u_x(0)+k \, \tilde u_t(0),\\
&
\tilde u(L) =\bar u(L),
\end{align*}
with a feedback parameter $k\in (0,\infty)$.

In terms of the physical variables $(q,\rho)$,
the boundary feedback law is
\begin{align*}
{\rm at} \,&x=0: q_x  - ({\rm ln}(\rho))_x\, q = \rho \, \bar u_x(0)+k
\left[ q_t  -  ({\rm ln}(\rho))_t\, q \right],\\
{\rm at} \,&x=L:q  =\bar u(L) \; \rho.
\end{align*}

%

Sufficient conditions for the exponential stability of this system will
be presented in Theorem \ref{thm4.2} in Section \ref{sec4}.


\subsection{
The Riemann invariants and
a differential equation for $\rho$ in terms of the velocity}
For classical solutions the isothermal Euler equations (\ref{2.1})
and (\ref{2.2}) can be equivalently written as the following system
\begin{align}\label{2.4}
\partial_t\left(\begin{array}{ll}
\rho\\q\end{array}\right)+\hat{A}(\rho,q)\
\partial_x\left(\begin{array}{ll}
\rho\\q\end{array}\right)=\hat{G}(\rho,q)
\end{align}
with the matrix
\begin{align*}
\hat{A}(\rho,q):=\left(\begin{array}{cc}
0 & 1\\
a^2-\frac{q^2}{\rho^2} & 2\frac{q}{\rho}\end{array}\right)
\end{align*}
and the source term
\begin{align*}
\hat{G}(\rho,q):=\left(\begin{array}{cc}
0\\-\frac{\theta}2\frac{|q|\, q}{\rho}\end{array}\right).
\end{align*}
System (\ref{2.4}) has two eigenvalues
$\tilde{\lambda}_-(\rho,q)$,
$\tilde{\lambda}_+(\rho,q)$
 and in the subsonic case we have
\begin{align}\label{2.5}
\tilde{\lambda}_-(\rho,q)=\frac{q}{\rho}-a<0<\tilde{\lambda}_+(\rho,q)=\frac{q}{\rho}+a.
\end{align}
In terms of the  Riemann invariants $R_\pm=R_{\pm}(\rho, q) =-\frac{q}\rho\mp
a\ln(\rho)$ the system (\ref{2.4})
has the
diagonal form
\begin{align}\label{2.6}
\partial_t\left(\begin{array}{ll}
R_+\\R_-\end{array}\right)+\hat{D}(R_+,R_-)\
\partial_x\left(\begin{array}{ll}
R_+\\R_-\end{array}\right)=\hat{S}(R_+,R_-),
\end{align}
where
\begin{align*}
\hat D(R_+,R_-)&:=\left(\begin{array}{cc}
\tilde{\lambda}_+ & 0\\
0 & \tilde{\lambda}_- \end{array}\right)=\left(\begin{array}{cc}
-\frac{R_++R_-}{2}+a & 0\\
0 & -\frac{R_++R_-}{2}-a\end{array}\right),
\end{align*}
\begin{equation}
\label{shutdefinition}
\hat S(R_+,R_-):=
\frac\theta8  \, (R_+ + R_-) \, |R_+ + R_-| \,
\left(\begin{array}{ll}1\\1\end{array}\right).
\end{equation}
In terms of $R_\pm$, for the physical variables
 $\rho$ and $q$
 we have
\begin{align}\label{2.9}
&\rho=\exp\left(\frac{R_--R_+}{2a}\right),\\\label{2.10}
&q=-\frac{R_++R_-}{2}\exp\left(\frac{R_--R_+}{2a}\right).
\end{align}
A gas flow is positive and subsonic (i.e. $0<q/\rho < a$) if and
only if
\begin{align}\label{2.11}
-2a < R_+(t, x) + R_-(t, x) < 0
\; \mbox{\rm for all }\; (t, x)\in\Omega.
\end{align}

For the velocity  $\tilde u=\tilde u(\rho,q)$  defined
in (\ref{2.35}) we have
\begin{align}\label{2.18}
\tilde u
=\frac{R_++R_-}{-2}.
\end{align}
Due to (\ref{2.5}),
we can express the velocity in terms of the eigenvalues as
\begin{align*}
\tilde u=\frac{\tilde{\lambda}_++\tilde{\lambda}_-}2.
\end{align*}
Due to equation  (\ref{2.18}),
(\ref{2.6}) yields
the second-order equation
(\ref{2.21}).
A detailed derivation can be found in \cite{gubook}.
The second-order quasilinear equation
(\ref{2.21}) is hyperbolic with the eigenvalues
\begin{align}\label{2.23}
\tilde\lambda_-=\tilde u-a<0<\tilde\lambda_+=\tilde u+a.
\end{align}
Using the isothermal Euler equations (\ref{2.1}) and (\ref{2.2}), we
obtain the partial derivatives of $\tilde u$ with respect to $t$ and
$x$, respectively,
\begin{align*}
\tilde u_t&
=\frac{q_t}{\rho}-\frac{q\rho_t}{\rho^2}\\
&=-\frac{1}{\rho}\left(\frac{q^2}{\rho}+a^2\rho\right)_x-\frac{q\rho_t}{\rho^2}
-\frac{\theta}2\frac{q \,|q|}{\rho^2}\\
&=\tilde u\frac{\rho_t}{\rho}+(\tilde
u^2-a^2)\frac{\rho_x}{\rho}-\frac{\theta}2\tilde u \,|\tilde u|
\end{align*}
and
\begin{align*}
\tilde
u_x&=\frac{q_x}{\rho}-\frac{q\rho_x}{\rho^2}=-\frac{\rho_t}{\rho}-\tilde
u\frac{\rho_x}{\rho}.
\end{align*}
Multiplying $\tilde u_t$ and $\tilde u_x$ by $\tilde u$ and $\tilde
u^2-a^2$, respectively,
by adding the two equations we obtain (\ref{2.34}), which means
that  $\rho$  and $ q$ can be obtained from $\tilde u$ and the initial
data.
Note that since $\tilde u = \frac{q}{\rho}$,
we have the same value for $\tilde u$ for
$\lambda q$ and $\lambda \rho$ where $\lambda\in (0,1]$.
So we cannot expect to recover the values of
$(q,\rho)$ from $\tilde u$  without additional information on $(q,\rho)$.
%
%
In a similar way as  (\ref{2.34}), we obtain the equation
\begin{equation}
\label{1672015}
{\rm ln}(\rho)_x = - \frac{1}{a^2} \left(\tilde u_t + \tilde u \, \tilde u_x +
\frac{\theta}{2} |\tilde u| \, \tilde u \right).
\end{equation}
Thus if $\tilde u$ is known, the values of
$\rho$ can be determined from the value of $\rho$
at a boundary point ($x=0$ or $x=L$)
and (\ref{1672015}) by integration.


\subsection{Stationary states of the system}

In \cite{DGL2} the existence, uniqueness and
the properties of stationary subsonic $C^1$-solutions
$(\bar\rho(x),\bar q(x))$ of the isothermal Euler equations
have been discussed.
The stationary states of the system on networks are studied in \cite{nhm}.

%
Here we focus on the stationary states of (\ref{2.21}).
Let  $\bar u=\bar{u}(x)$
denote a stationary
state for the second-order equation (\ref{2.21}).
Then
(\ref{2.21}) yields the following  second-order ordinary
differential equations for $\bar u(x)$:
\begin{align}\label{2.24}
(a^2-\bar u^2(x))\frac{d^2}{dx^2}\bar u(x)=2\bar
u(x)\Big(\frac{d}{dx}\bar u(x)\Big)^2+\frac32\theta\bar
u(x)\, |\bar u(x)| \,  \frac{d}{dx}\bar u(x).
\end{align}
This implies that
equation
(\ref{2.21}) has
constant stationary states
$\bar u \in (-\infty,\infty)$  that can attain arbitrary real values.
In contrast to this situation,
 the isothermal Euler equations with friction
(that is (\ref{2.1}), (\ref{2.2}))
 do not have constant stationary states
 except for the case of velocity zero.
 The stationary states of  (\ref{2.1}), (\ref{2.2})
 have been  studied
 in \cite{nhm}.
Now we consider the question:
Given a constant state $\bar u=\lambda\in (0,\,\infty)$,
is there a solution $(q,\rho)$ of
(\ref{2.1}), (\ref{2.2}) that corresponds to the constant velocity $\bar u$?
For $\lambda=0$ we obtain the constant solution of
(\ref{2.1}), (\ref{2.2}) where $q=0$.
For $\lambda>0$ there is a corresponding solution  of travelling wave type
(in particular the corresponding solution of (\ref{2.1}), (\ref{2.2}) is not
stationary),
namely
\begin{equation}
\left(q(t,x),\rho(t,x)\right)= \left( \lambda \, \alpha( \lambda \, t- x),\;
\alpha( \lambda t- x)\right)
\end{equation}
where the function $\alpha$ is given by
\begin{equation}
\label{alphadefinition}
\alpha(z)= C \, \exp\left(\frac{\lambda^2 \theta}{2 \, a^2 } z \right)
\end{equation}
and $C>0$ is a positive constant.
%
Equation (\ref{2.24}) can be rewritten in
the form
\begin{equation}
\frac{d}{dx}\left( (a^2-\bar u^2(x)) \bar u_x(x) - \frac{\theta}{2}
|\bar u(x)| \,
\bar u^2(x)\right)=0.
\end{equation}


Thus for every stationary state $\bar u$ of (\ref{2.21}) there exists
a constant $\lambda \in (-\infty,\,\infty)$ such that
$\bar u$ satisfies the first order ordinary
differential equation
\begin{equation}
\label{lambdagleichung}
 (a^2-\bar u^2(x))
 \bar u_x(x)  = \lambda+  \frac{\theta}2  | \bar u| \, \bar u^2(x).
\end{equation}



We use the notation $\bar u_0:=\bar u(0)$. Assume that
$\bar u_0 \in (0,\,a)$.
Let $[0,\,x_0)$ denote the maximal existence interval of the solution.
For the  solutions that are not constant, we have
two cases:

If $\lambda + \frac{\theta}{2}\;  \bar u_0^3>0$,
$\bar u$ is strictly increasing on $[0,\,x_0)$ and
\begin{align*}
\lim_{x\rightarrow x_0-}\bar u(x)=a,
\lim_{x\rightarrow x_0-}\frac{d}{dx}\bar u(x)=+\infty,\
\lim_{x\rightarrow x_0-}\frac{d^2}{dx^2}\bar u(x)=+\infty\,
.
\end{align*}

If $\lambda + \frac{\theta}{2}\;  \bar u_0^3<0$,
$\bar u$ is strictly decreasing  on $[0,\,x_0)$ and
\begin{align*}
\lim_{x\rightarrow x_0-}\bar u(x)= - a,
\lim_{x\rightarrow x_0-}\frac{d}{dx}\bar u(x)=-\infty,\
\lim_{x\rightarrow x_0-}\frac{d^2}{dx^2}\bar u(x)=-\infty\,
.
\end{align*}

For stationary $\rho$ and $\tilde u$,
equation (\ref{2.34}) implies
(\ref{lambdagleichung})
 with $\lambda=0$.
The stationary states that correspond to $\lambda\not=0$
cannot be deduced from the stationary states of (\ref{2.1}), (\ref{2.2}).
Thus all the stationary solutions of (\ref{2.21})
that correspond to a  stationary state of (\ref{2.1}), (\ref{2.2})
must satisfy the equation
\begin{equation}
\label{statvor}
\bar u'(0)= \frac{\theta}{2} \frac{|\bar u_0| \,\bar u_0^2}{a^2 - \bar u_0^2}.
\end{equation}
The following Lemma contains an explicit representation
for these stationary velocities.
\begin{lemma}
\label{lemmastationary}
Let a  subsonic stationary state $\bar u(x)>0$ for $x\in [0,L]$ that
is not constant and satisfies
(\ref{statvor})
  be given.
Let $W_{-1}(x)$ denote the real branch of the Lambert W--function
(see \cite{CGH, LA})
with $W_{-1}(x) \leq -1$.
Then the following equation holds for all $x\in [0,L]$:
\[
 (\bar u(x))^ 2=
\frac{a^2}{- W_{-1} ( - \exp (\theta \, x + \bar C))},
\]
where $\bar C$ is a real constant such that
$ \bar C \leq - 1-  \theta L $.
\end{lemma}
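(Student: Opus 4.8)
The plan is to reduce the claim to a separable first-order ODE and then recognize its implicit solution as a Lambert $W$ equation. First I would note that since $\bar u$ is a stationary state of (\ref{2.21}), it satisfies (\ref{lambdagleichung}) for some constant $\lambda$. Evaluating (\ref{lambdagleichung}) at $x=0$ and comparing with the hypothesis (\ref{statvor}) forces $\lambda=0$, so on all of $[0,L]$ we have $(a^2-\bar u^2)\,\bar u_x = \frac{\theta}{2}|\bar u|\,\bar u^2$. Because $\bar u(x)>0$ we may drop the absolute value and obtain the separable equation $(a^2-\bar u^2)\,\bar u_x = \frac{\theta}{2}\,\bar u^3$.

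Second, I would separate variables and integrate the left-hand side $\frac{a^2-\bar u^2}{\bar u^3}$, which gives the implicit relation $-\frac{a^2}{2\bar u^2}-\ln\bar u = \frac{\theta}{2}x + c_0$ for some real constant $c_0$. Multiplying by $2$ and introducing $w := -a^2/\bar u^2$ (so that $\bar u^2 = -a^2/w$ and $\ln(\bar u^2)=\ln(a^2)-\ln(-w)$), the relation becomes $w+\ln(-w)=\theta x+\bar C$ with $\bar C:=2c_0+\ln(a^2)$. Exponentiating turns this into $w\,e^{w}=-\exp(\theta x+\bar C)$, which is precisely the defining equation of the Lambert $W$-function evaluated at the argument $-\exp(\theta x+\bar C)$.

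Third, and this is the step needing the most care, I would identify the correct branch and the admissible range of $\bar C$. The subsonic assumption $0<\bar u<a$ gives $\bar u^2<a^2$, hence $w=-a^2/\bar u^2<-1$; since the real values of $W$ strictly below $-1$ are attained exactly on the $W_{-1}$ branch, we must set $w=W_{-1}(-\exp(\theta x+\bar C))$, and solving $\bar u^2=-a^2/w$ then yields the claimed formula. Finally, $W_{-1}$ is real only when its argument lies in $[-1/e,0)$, i.e. $-\exp(\theta x+\bar C)\ge -1/e$, equivalently $\theta x+\bar C\le -1$; requiring this for every $x\in[0,L]$, with the binding constraint at $x=L$, gives exactly $\bar C\le -1-\theta L$. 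The main obstacle is thus not the integration itself but the bookkeeping of the Lambert branch and the derivation of the range of $\bar C$; I would also briefly check consistency, namely that the constructed $\bar u$ stays in $(0,a)$ and is non-constant, the latter being automatic since a constant positive solution of the $\lambda=0$ equation is impossible.
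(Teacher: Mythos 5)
Your proposal is correct and follows essentially the same route as the paper: reduce to the $\lambda=0$ equation $(a^2-\bar u^2)\,\bar u_x=\tfrac{\theta}{2}\bar u^3$, separate variables, and recognize the resulting implicit relation as a Lambert $W$ equation, with the subsonic bound $a^2/\bar u^2>1$ forcing the $W_{-1}$ branch (your substitution $w=-a^2/\bar u^2$ is the paper's $\xi=a^2/\bar u^2$ up to sign). Your explicit derivation of $\lambda=0$ from (\ref{statvor}) and of the constraint $\bar C\le -1-\theta L$ from the domain of $W_{-1}$ only spells out details the paper leaves implicit.
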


\begin{proof}
Separation of variables yields
\[
x  + \hat C= \int \frac{a^2 - \bar u(x)^2}{\frac{\theta}{2} \bar u(x)^3} \,
\bar u'(x)\,
 dx
=
-\frac{1}{\theta }
\left[{\rm ln}(a^2) +
{\rm ln}\left( \frac{\bar u(x)^2}{a^2}\right)  + \frac{a^2}{\bar u(x)^2}
\right].
\]
Define $\xi =  \frac{a^2}{\bar u(x)^2} \in (1,\,\infty)$.
We have $\xi + {\rm ln}(1/\xi)=\xi - {\rm ln}(\xi).
$
Thus
\[
- \exp(\theta x + \theta  \hat C + {\rm ln}(a^2)) = - \exp (-\xi +{\rm ln}(\xi))
=(-\xi) \exp( - \xi).
\]
Now the definition of $W_{-1}$ as the inverse function
of $z\exp(z)$ for $z \in (-\infty,\, -1)$
yields the assertion.
\end{proof}
Since for the stationary states $(q,\rho)$ of (\ref{2.1}), (\ref{2.2})
the flow rate $q$ is constant,  by (\ref{2.35}) we get
the corresponding density as $\rho(x)= \frac{q}{\bar u(x)}$.

%
%


\section{Well-posedness  of the system locally around stationary states}

\label{well-posed}

Now we consider  non-stationary solutions  locally around a subsonic stationary state $\bar
u(x) >0$ on $\Omega$
that satisfies
(\ref{lambdagleichung}) with $\lambda=0$, that is
that corresponds to a stationary state of  (\ref{2.1}), (\ref{2.2}).
For a solution $\tilde u(t,x)$ of (\ref{2.21}),
define
\begin{equation}
\label{udefinition2016}
u (t,x) = \tilde u(t,x) - \bar u(x).
\end{equation}
Then (\ref{2.21}),
(\ref{2.24})
and (\ref{lambdagleichung})
 yield the equation
\begin{align}\label{2.26}
u_{tt}+2(\bar u+u)u_{tx}-\Big(a^2-(\bar u+u)^2\Big)u_{xx}=
F(x,u,u_x,u_t),
\end{align}
where $F:=F(x,u,u_x,u_t)$ satisfies
\begin{eqnarray}
\label{Fdefinition}
F & = &
 \tilde F(u + \bar u,\, u_x + \bar u_x\,,u_t)
+
\frac{ a^2 - (\bar u + u)^2}{a^2 - \bar u^2} \, \bar u \left(  2 (\bar u_x)^2 + \frac{3}{2}\, \theta \, |\bar u|\, \bar u_x  \right)
\\
& = &
 \tilde F(u + \bar u,\, u_x + \bar u_x\,,u_t)
 -
 \frac{ a^2 - (\bar u + u)^2}{a^2 - \bar u^2} \, \tilde F( \bar u,\,  \bar u_x\,,0).
\end{eqnarray}
If $\bar u \geq 0 $ and $\bar u+ u\geq 0$,
we have
\[\tilde F(u + \bar u,\, u_x + \bar u_x\,,u_t)
=
\tilde F(\bar u, \bar u_x, 0)
+
\tilde F(u,u_x,u_t)
\]
\[
-2 \, \bar u  \, u_x^2
- 2 \,  \bar u_x \, u_t
- 4  \, \bar u_x\, u \, u_x  - 2 \, \bar u_x^2\, u
- 4 \, \bar u\, \bar u_x \, u_x
\]
\[
- 3\, \theta \, \bar u \,  u\, u_x
- \frac{3}{2}\, \theta  \, \bar u^2 \, u_x - \frac{3}{2}\, \theta  \, \bar u_x \, u^2
- 3 \, \theta \, \bar u_x \bar u \, u  - \theta\, \bar u \, u_t.
\]
Using
\[\bar u_x = \frac{\theta}{2} \,\frac{1}{a^2 - \bar u^2} \, \bar u^3\]

this yields
\begin{eqnarray}
\label{2.27}
F & = &
\tilde F(u,u_x,u_t) - \theta^2 \frac{3a^4\bar u^4-2a^2\bar u^6+\bar u^8}{2(a^2-\bar u^2)^3} \,u
-
\theta \,\frac{a^2\bar u}{a^2-\bar u^2}\, u_t
- \theta \, \frac{\bar u^4+3a^2\bar u^2}{2(a^2-\bar u^2)}\, u_x
\nonumber\\
& - & \theta^2 \, \frac{2\bar u^7-3a^2\bar u^5+3a^4\bar u^3}{4(a^2-\bar u^2)^3}\,u^2
-
2\, \bar u \, u_x^2
-\theta \, \frac{3a^2\bar u-\bar u^3}{a^2-\bar u^2}\, u \, u_x
\end{eqnarray}
with $\tilde F$ as defined in (\ref{2.22}).
%
%
%
For the second-order quasilinear hyperbolic equation (\ref{2.26}),
we consider the  initial conditions
\begin{align}\label{2.28}
t=0:\ u=\varphi(x),\ u_t=\psi(x),\ \ x\in[0,L]
\end{align}
and the boundary feedback conditions
\begin{align}\label{2.29}
&x=0:\ u_x=k\, u_t,
\\
\label{2.30} &x=L:\ u=0,
\end{align}
where $k>0$ is a real  constant.
We work in the framework of classical  semi-global solutions.
To apply the theory presented in
\cite{Wang ZQ1}, the second order equation is written
as a first order  system (see  the proof of Theorem 1 in \cite{LBW}).
In this way the following result can be obtained
(see  Lemma 1 in \cite{LBW}):

\begin{lemma}\label{well-posedness}
Let a subsonic stationary state $\bar u(x) >0$
as in Lemma \ref{lemmastationary}
be given.
Choose $T>0$ arbitrarily large.

There exist  constants
$\varepsilon_0(T)>0$
and $C_T>0$,
such that if the
initial data $(\varphi(x),\psi(x))\in C^2([0,L])\times C^1([0,L])$
satisfies
\begin{equation}
\label{epsilon0bedingung}
\max\left\{  \|\varphi(x)\|_{ C^2([0,L])},\;
   \|\psi(x))\|_{ C^1([0,L])}
   \right\}
   \leq \varepsilon_0(T)
\end{equation}
and the $C^2$-compatibility conditions are satisfied at the points
$(t,x)=(0,0)$ and  $(0,L)$,
then the initial-boundary problem
(\ref{2.26}),(\ref{2.28}),(\ref{2.29})-(\ref{2.30}) has a unique
solution $u(t,x)\in
C^2([0,T]\times [0,L])$.
Moreover the following estimate holds:
\begin{equation}
\label{apriori}
\| u\|_{ C^2([0,T]\times [0,L])} \leq C_T \max\left\{  \|\varphi(x)\|_{ C^2([0,L])},\;
   \|\psi(x))\|_{ C^1([0,L])}
   \right\}.
   \end{equation}
\end{lemma}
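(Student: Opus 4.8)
The plan is to recast the scalar second-order problem \eqref{2.26}, \eqref{2.28}, \eqref{2.29}--\eqref{2.30} as a first-order quasilinear hyperbolic system and then invoke the semi-global classical solution theory. Introducing $v:=u_t$ and $w:=u_x$ and retaining $u$ itself as a third, zero-speed component (since $F$ depends on $u$ and the condition at $x=L$ is imposed on $u$), equation \eqref{2.26} is equivalent to
\begin{align*}
u_t &= v,\\
v_t + 2(\bar u + u)\, v_x - \big(a^2 - (\bar u + u)^2\big)\, w_x &= F(x,u,w,v),\\
w_t - v_x &= 0,
\end{align*}
the last identity being $u_{xt}=u_{tx}$. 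The $2\times 2$ principal block in $(v,w)$ has eigenvalues $\tilde\lambda_\pm = (\bar u + u)\pm a$, in agreement with \eqref{2.23}. For $u$ small and $\bar u$ subsonic we retain $\tilde\lambda_- < 0 < \tilde\lambda_+$ uniformly on $\Omega$, so the system is strictly hyperbolic with exactly one incoming characteristic at each endpoint, while the $u$-component propagates with speed zero.

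Next I would diagonalize. Using the left eigenvectors $\ell_\pm = (1,\, \bar u + u \mp a)$ for the $(v,w)$ block, the characteristic variables $z_\pm := v + (\bar u + u \mp a)\, w$ propagate, to principal order, with speeds $\tilde\lambda_\pm$; thus at $x=0$ the incoming variable is $z_+$ and the outgoing one is $z_-$, while at $x=L$ the roles are reversed. It then remains to put the boundary conditions into the dissipative form required by the theory, that is, to solve each boundary relation for the incoming characteristic in terms of the outgoing one. At $x=0$ the feedback \eqref{2.29} reads $w=k v$, which gives $z_+ = \frac{k^{-1}+\bar u+u-a}{k^{-1}+\bar u+u+a}\, z_-$; the denominator is strictly positive because $k>0$, $\bar u + u>0$ and $a>0$, so this is admissible. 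At $x=L$ the condition \eqref{2.30} is differentiated in $t$ to $u_t=0$, i.e. $v=0$, yielding $z_- = \frac{\bar u+u+a}{\bar u+u-a}\, z_+$, and the denominator is nonzero by subsonicity $\bar u+u<a$.

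With the system in diagonal form and the boundary conditions in this reduced form, I would apply the semi-global $C^1$ existence, uniqueness and a priori estimate theory for first-order quasilinear hyperbolic systems (the results of \cite{Wang ZQ1}, as carried out for this reduction in the proof of Theorem 1 of \cite{LBW}). For any prescribed horizon $T$, that theory produces a smallness threshold $\varepsilon_0(T)$ and a constant $C_T$ such that data satisfying \eqref{epsilon0bedingung} together with the $C^2$-compatibility conditions at the corners $(0,0)$ and $(0,L)$ admit a unique $C^1$ solution $(u,v,w)$ on $[0,T]\times[0,L]$ with $C^1$-norm bounded by $C_T$ times the data norm. Translating back through $v=u_t$, $w=u_x$ gives $u\in C^2([0,T]\times[0,L])$ and the estimate \eqref{apriori}.

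I expect the boundary treatment at $x=L$ to be the main obstacle. The original condition is imposed on the zero-speed component $u$ rather than on a characteristic variable, so one must check that replacing $u(\cdot,L)=0$ by the differentiated relation $v(\cdot,L)=0$ does not alter the solution set: integrating $u_t=v=0$ along $x=L$ gives $u(t,L)=u(0,L)=\varphi(L)$, which vanishes precisely because the compatibility condition at $(0,L)$ forces $\varphi(L)=0$. Verifying that this reduction neither over- nor under-determines the zero-speed variable, and that the strict separation $\tilde\lambda_- <0<\tilde\lambda_+$ together with the nonvanishing of the boundary denominators persists for all data within the threshold $\varepsilon_0(T)$, is the heart of the argument; once this is in place the cited theory applies and the estimate follows.
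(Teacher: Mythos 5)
Your proposal is correct and follows essentially the same route as the paper: the paper proves this lemma simply by rewriting the second-order equation as a first-order quasilinear system and invoking the semi-global classical solution theory of \cite{Wang ZQ1} as carried out in the proof of Theorem 1 (and Lemma 1) of \cite{LBW}. Your reduction via $v=u_t$, $w=u_x$, the characteristic variables $z_\pm$, and the verification that the boundary conditions (\ref{2.29}), (\ref{2.30}) become admissible relations between incoming and outgoing characteristics is exactly the content the paper delegates to those references.
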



\section{Exponential stability}
In this section, we introduce a strict $H^2$-Lyapunov function
for the closed-loop system
consisting of the quasilinear wave equation (\ref{2.26})
and the boundary conditions (\ref{2.29}), (\ref{2.30}).
To motivate the choice of the Lyapunov function,
let us reconsider the
classical energy for
systems governed by the linear wave equation
$u_{tt} - c^2\, u_{xx}=0$, which
is $\int_0^L  c^2 \, (u_x)^2 + (u_t)^2  \, dx$.
In our quasilinear wave equation (\ref{2.26}),
instead of the square of the wave speed $c^2$ the term
$(a^2 - (\bar u + u)^2)$ appears
as a factor in front of $u_{xx}$, so it makes sense to
replace $c^2$ by this expression in the definition of  our
 Lyapunov  function.
In the same line of reasoning,
if our quasilinear equation would be
$$u_{tt} - (a^2 - (\bar u + u)^2) \, u_{xx}=F,$$
the integral
$\int_0^L   (a^2 - (\bar u + u)^2) \, (u_x)^2 + (u_t)^2  \, dx$
would be a  candidate for a Lyapunov function.
 However, in our wave equation also the term  $2(\bar u + u) u_{tx}$ appears.
In order to deal with this term, we  introduce an additional quantity
in our Lyapunov function in such a way that,
via  equation  (\ref{2.26}),
we can find an upper bound
for its time-derivative.
For this purpose, it makes sense to introduce a
term
that contains the product $u_t \, u_x$ in the integral defining the first part
of our Lyapunov function.
As a further motivation, we return to the
linear wave equation $u_{tt}-u_{xx}=0$
with the associated boundary conditions $u(t,0)=0$
and $u_t(t,L) = - k u_x(t,L)$ with $k>0$.
For a number $\lambda\in (0, \,
\frac{ 2 k}{L(1+k^2)})$,
the quantity
\[
E(t)= \int_0^L   (u_x)^2 + (u_t)^2  \,
+ \lambda \, x  \, u_x \, u_t  dx\]
can be used  to  show the exponential decay,
 since
 $E'(t) \leq -  \lambda \, (1 - \lambda L) \, E(t)$.

For many hyperbolic systems
 exponential weights in the Lyapunov function have been used
successfully,
see various examples in \cite{Coronbook}.
We define the
weights
\begin{eqnarray}
\label{h1defi}
h_1(x) & = & |k|, 
\\
\label{h2defi}
h_2(x) & = & \exp\left(- \frac{x}{L} \right).       
\end{eqnarray}
%
In the sequel we consider
\[
E_1(t)
=
\int_0^L
h_1(x) \,  \Big((a^2-(\bar u+u)^2)\, u_x^2+u_t^2\Big)-2  \, h_2(x)\,
\Big((\bar u+u)u_x^2+u_t \, u_x\Big)\,dx
\]
since
according to the previous considerations,
this is a natural candidate   to define a Lyapunov function for our system.

To show the exponential decay with respect to the
$H^2$-norm,
it is necessary  to deal with the second order derivatives.
Therefore we also introduce
$E_2(t)$ which is defined analogously to $E_1$ to
show the decay of the partial derivatives of second order.
We define
\[
E_2(t)=
\int_0^L h_1(x)\Big((a^2-(\bar
u+u)^2)u_{xx}^2+u_{tx}^2\Big)- 2 \, h_2(x) \,
\Big((\bar
u+u)u_{xx}^2+u_{tx}u_{xx}\Big)\,dx.
\]

We define the Lyapunov function $E(t)$ as
\begin{align}\label{4.1}
E(t)=  E_1(t)+E_2(t).
\end{align}

In the following subsection we show that
our  Lyapunov function $E(t)$ as defined in  (\ref{4.1})
is   bounded above and below by the product of
appropriate constants and the square of the $H^2$-norm of $u$.


\subsection{Equivalence of $\sqrt{E(t)}$
 with $E(t)$  as in (\ref{4.1})
and the $H^2$-norm of the state}
\label{equivalencesection}

%
%

In this section we show that
$\sqrt{E(t)}$
 with $E(t)$  as in (\ref{4.1}) is
equivalent to the $H^2$-norm of the state. This is a
an essential property of
a Lyapunov  function since we want to use it
to show the exponential decay of the $H^2$-norm.
%
%
Note that the constants in Lemma \ref{neulemmaneueversion}
are
independent of the length
$T$ of the time interval.





\begin{lemma}
\label{neulemmaneueversion}
Let a real number
\begin{equation}
\label{gammaintervalldefinition}
\gamma \in (0, \frac{1}{2}]
\end{equation}
 be given.
Choose a real number $k>0$ such that
\begin{equation}
\label{c2c1assumption}
\frac{1}{k} \in  (0,\, (1 - \gamma) \, a) .
\end{equation}
Assume that  $\bar u$ is
such that
we have
\begin{equation}
\label{ubarvor1}
\bar u \in (0, \,  \gamma  \, a).
\end{equation}

Then for the
weights
defined in (\ref{h1defi}), (\ref{h2defi})
on the interval $[0,L]$
we have  the strict inequality
\begin{eqnarray}
\label{41a}
h_2 &  < &  (a - \bar u) \, h_1.
\end{eqnarray}

In addition, we assume that $\bar u$ is sufficiently small in the sense that
\begin{equation}
\label{ubarvor2}
\sup_{x\in [0,L]} \frac{\bar u \, (a^2 - \bar u^2)}{a^2 + 3 \, \bar u^2} < \frac{1}{2\,{\rm e}\, k}.
\end{equation}
Then for the
weights we have the inequality
\begin{equation}
\label{41c}
h_2 >  \frac{\bar u (a^2 - \bar u^2)}{a^2 + 3 \bar u^2} 2\,
 h_1.
\end{equation}

For a real number $z$ define
\begin{equation}
\label{b11definition}
b_{11}(z) = 1 + 2\, z\, k - \frac{(1 + 2\, z\, k)^2}{k^2\,(a^2 - z^2)}.
\end{equation}
Assume that
\begin{equation}
\label{upsilonvorraussetzung}
\upsilon >k^2.
\end{equation}
Define the matrix
\begin{equation}
\label{tildeb3definition}
\tilde B_3(z)=
\left(
\begin{array}{ll}
b_{11}(z) &
\frac{1 + 2\, z\, k}{k\,(a^2 - z^2)} - k
\\
\frac{1 + 2\, z\, k}{k\,(a^2 - z^2)} - k & \upsilon - \frac{1}{a^2 - z^2}
\end{array}
\right).
\end{equation}
For a real number $z$ define
\begin{equation}
\label{cgdefinition}
 C_g(z)
=
 \frac{ a^2 - z^2}{{\rm e}\, z^2\left( 2
+ \frac{3}{2} \, \theta \, z +  \frac{ 2\,\theta\,z^3 }{  a^2 - z^2 } \right)^2}
\end{equation}
Define the matrix
 \begin{equation}
 \label{tildea3definition}
 \tilde A_3(z)
 =
 \left(
 \begin{array}{cc}
 \frac{1}{\rm e}\, \frac{a^2 + 3 z^2 }{a^2 - z^2} -  2\, k \,z
  &
  k - \frac{1}{\rm e}\, \frac{2\, z }{a^2 - z^2}
 \\
  k - \frac{1}{\rm e}\, \frac{2\, z }{a^2 - z^2}
 &
 C_g(z) + \frac{1}{\rm e}\, \frac{ 1 }{a^2 - z^2}
 \end{array}
 \right).
 \end{equation}
Then there exists $\varepsilon_1(\upsilon)>0$
such that for all $z$ with $|z| \leq
2 \,\varepsilon_1(\upsilon)$
the matrix $\tilde B_3(z)$ is positive definite
and the matrix $\tilde A_3(z)$ is positive definite.
\end{lemma}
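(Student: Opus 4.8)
The plan is to treat the four assertions in sequence, handling the two weight inequalities by direct estimation and the two matrix conditions by evaluation near $z=0$ together with an openness (continuity) argument.

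First, for (\ref{41a}) I would use that $h_1 \equiv k$ is constant and that $h_2(x)=\exp(-x/L)$ is strictly decreasing on $[0,L]$, so it attains its maximum $1$ at $x=0$ and $h_2(x)\le 1$ throughout. From (\ref{ubarvor1}) we have $a-\bar u(x) > (1-\gamma)\,a$, while (\ref{c2c1assumption}) gives $(1-\gamma)\,a\,k > 1$. Combining, $(a-\bar u)\,h_1 > (1-\gamma)\,a\,k > 1 \ge h_2$, which is the claimed strict inequality at every $x$. For (\ref{41c}) I would instead use the lower bound $h_2(x) \ge \exp(-1) = 1/e$ on $[0,L]$, the minimum being at $x=L$; assumption (\ref{ubarvor2}) then reads $2\,k\,\frac{\bar u(a^2-\bar u^2)}{a^2+3\bar u^2} < 1/e \le h_2$, which is exactly (\ref{41c}). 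Both steps are elementary once the monotonicity of $h_2$ is recorded.

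Next I turn to $\tilde B_3(z)$. I would evaluate it at $z=0$, where $b_{11}(0)=1-\frac{1}{k^2a^2}$; since (\ref{c2c1assumption}) forces $\frac{1}{ka} < 1-\gamma \le 1$, the $(1,1)$-entry is positive. A direct computation of the determinant at $z=0$ gives $\det\tilde B_3(0) = \upsilon\bigl(1-\frac{1}{k^2a^2}\bigr) + \frac{1}{a^2} - k^2$, and here the hypothesis $\upsilon > k^2$ of (\ref{upsilonvorraussetzung}) is exactly what is needed: substituting $\upsilon = k^2$ yields the value $0$, so by $\upsilon>k^2$ and $1-\frac{1}{k^2a^2}>0$ we get $\det\tilde B_3(0)>0$, hence $\tilde B_3(0)$ is positive definite. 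Since every entry of $\tilde B_3(z)$ is continuous in $z$ on $\{|z|<a\}$ (the only denominator being $a^2-z^2$) and positive definiteness of a symmetric $2\times2$ matrix is an open condition (positivity of the $(1,1)$-entry and of the determinant), $\tilde B_3(z)$ remains positive definite for all $z$ in some neighbourhood of $0$ whose size may be taken to depend on $\upsilon$.

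The matrix $\tilde A_3(z)$ requires more care, and this is the main obstacle, because $C_g(z)$ is singular at $z=0$: as $z\to 0$ the denominator in (\ref{cgdefinition}) behaves like $4\,e\,z^2$ while the numerator tends to $a^2$, so $C_g(z)\to +\infty$. Consequently $\tilde A_3$ cannot be evaluated at $z=0$ directly, and its positive definiteness must be read off from the behaviour for small $z\neq 0$. I would argue that the $(1,1)$-entry $\frac{1}{e}\frac{a^2+3z^2}{a^2-z^2}-2kz$ tends to $1/e>0$ and hence stays positive for $|z|$ small; the off-diagonal entry $k-\frac{1}{e}\frac{2z}{a^2-z^2}$ stays bounded near $k$; and the $(2,2)$-entry $C_g(z)+\frac{1}{e(a^2-z^2)} \ge C_g(z)\to+\infty$. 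Therefore the determinant of $\tilde A_3(z)$, being the product of the $(1,1)$-entry and the $(2,2)$-entry minus the square of the off-diagonal entry, is a quantity bounded below by a positive constant times a quantity tending to $+\infty$, minus a bounded term, hence strictly positive for all sufficiently small $z\neq 0$. Together with positivity of the $(1,1)$-entry this gives positive definiteness of $\tilde A_3(z)$ near $0$. Finally I would choose $\varepsilon_1(\upsilon)>0$ so small that $2\,\varepsilon_1(\upsilon)$ lies inside the intersection of the two neighbourhoods obtained for $\tilde B_3$ and $\tilde A_3$, completing the proof. The only genuinely delicate point is the $\tilde A_3$ step, where one exploits the blow-up of $C_g$ rather than a finite limit; everywhere else the estimates are routine.
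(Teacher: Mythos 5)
Your proposal is correct and follows essentially the same route as the paper's proof: the two weight inequalities via $h_2\le 1$, $h_2\ge 1/{\rm e}$ together with (\ref{c2c1assumption}), (\ref{ubarvor1}), (\ref{ubarvor2}); positive definiteness of $\tilde B_3$ by evaluation at $z=0$ plus continuity, where your unfactored determinant $\upsilon\bigl(1-\frac{1}{k^2a^2}\bigr)+\frac{1}{a^2}-k^2$ is exactly the paper's factorization $\bigl(1-\frac{1}{k^2a^2}\bigr)(\upsilon-k^2)$; and positive definiteness of $\tilde A_3$ from positivity of the $(1,1)$-entry combined with the blow-up of $C_g(z)$ as $z\to 0$. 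Your explicit observation that $\tilde A_3$ cannot be evaluated at $z=0$ itself and must be handled for small $z\neq 0$ is, if anything, slightly more careful than the paper's treatment of that same point.
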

\begin{proof}
First we show that (\ref{41a}) holds.
This is equivalent to the inequality
$$\frac{1}{k}
< \inf_{x\in [0,L]} \exp( \frac{1}{L}\, {x})
(a - \bar u(x)).
$$
Our assumptions (\ref{c2c1assumption}) and (\ref{ubarvor1}) imply that
$$\frac{1}{k} < (1 - \gamma) \, a
\leq
\inf_{x\in [0,L]} \, (a - \bar u(x)) \leq \inf_{x\in [0,L]} \exp\left( \frac{1}{L}\, x\right)
(a - \bar u(x)),$$
and (\ref{41a}) follows.
%
%
%
%
%
%
If  (\ref{ubarvor2}) holds,
we have
$$\left(\frac{h_2}{h_1}\right)
\geq
\frac{1}{{\rm e}\, k}
>  \sup_{x\in [0,\,L]} \frac{2\,\bar u \, (a^2 - \bar u^2)}{a^2 + 3 \bar u^2}
$$ and (\ref{41c}) follows.

Now we come to the assertion for the symmetric matrix $\tilde B_3$.
Due to (\ref{c2c1assumption}) we have
$b_{11}(0)= 1 - \frac{1}{k^2 \,a^2}>0$.
 Due to the continuity of $b_{11}( \cdot)$
 this implies that there exists a constant
 $\varepsilon_1>0$ such that for all
 $|z| \leq
 2  \,\varepsilon_1$ we have
 $b_{11}(z) >0$.
We have
\[
\det \tilde B_3(0) =
\det
\left(
\begin{array}{ll}
 1 - \frac{1}{k^2\, a^2}  & \frac{1}{k\, a^2} - k
 \\
  \frac{1}{k\, a^2} - k  &   \upsilon - \frac{1}{a^2}
\end{array}
\right)= \left( 1 - \frac{1}{k^2\, a^2} \right) \,( \upsilon - k^2).
\]
Hence (\ref{upsilonvorraussetzung}) implies $\det \tilde B_3(0)>0$.
 Due to the continuity of $\det \tilde B_3( \cdot)$
 this implies that we can choose the constant
 $\varepsilon_1>0$ in such a way that for all
 $|z| \leq
 2 \,
 \varepsilon_1$ we have
 $\det \tilde B_3(z) >0$,
 and thus $\tilde B_3(z)$ is positive definite.
We can choose the constant
 $\varepsilon_1>0$ in such a way that for all
 $|z| \leq
 2 \,
 \varepsilon_1$
 for the $2\times 2$ matrix $\tilde A_3(z)$
the upper left element in the matrix
is greater than zero.
We have
 \begin{equation}
 \label{cgblowup}
 \lim_{z\rightarrow 0+} C_g(z)=\infty.
 \end{equation}
Due to (\ref{cgblowup}) we can assume that
$\varepsilon_1>0$  is sufficiently small
 such  that for all
 $|z| \leq 2 \,\varepsilon_1$
we have  $\det \tilde A_3(z) >0$,
and thus $\tilde A_3(z)$ is positive definite.
\end{proof}



In Lemma \ref{normlemma} we show several inequalities that we need to show that
$E(t)$  as in (\ref{4.1}) can be bounded above and below by
the squared $H^2$-norm.
Note that also in Lemma \ref{normlemma}
the constants
are  independent of the length
$T$ of the time interval.

\begin{lemma}
\label{normlemma}

Let all assumptions of Lemma \ref{neulemmaneueversion} hold.
In particular, let
$k$ such that
(\ref{c2c1assumption}) holds be  given.
Let a stationary subsonic state $\bar
u(x)\in C^2(0,L)$ be given. Assume that  $\bar u$ is sufficiently
small in the sense that (\ref{ubarvor1}) and (\ref{ubarvor2})  hold.
%

For $x\in [0,L]$ and real numbers $v_0$  define the real function
\begin{equation}
\label{k1definition}
k_1(x,v_0)=
h_1(x)\, \left(a^2-(\bar u(x) + v_0)^2\right)-2 \, h_2(x)\,(\bar u(x) + v_0).
\end{equation}

If $\varepsilon_2>0$ is chosen sufficiently small, we have
\begin{align}
\label{5.44g}
&K  _1:=\min_{x\in[0,L]}\min_{|v_0|\leq\varepsilon_2}
k_1(x,v_0)-\frac{h_2^2(x)}{h_1(x)}>0,\\
\label{5.45g}
&\widetilde{K}_1:=\min_{x\in[0,L]}\min_{|v_0|\leq\varepsilon_2} \frac{h_1(x)  \, k_1(x,v_0)-h_2^2(x)}{k_1(x,v_0)}>0.
\end{align}

Assume in the sequel that $\varepsilon_2>0$ is chosen such that (\ref{5.44g}) and  (\ref{5.45g}) hold.

For $x\in [0,L]$ and real numbers $v_0$, $v_1$, $v_2$ define the real function
\begin{eqnarray}
&&\chi^x(v_0,v_1,v_2)=
\label{fxdefinition}
\\
&&h_1(x) \, \Big((a^2-(\bar u(x)+v_0)^2)v_1^2+v_2^2\Big)-
2 \, h_2(x) \,
\Big((\bar u(x) + v_0) v_1^2+ v_1 v_2\Big).
\end{eqnarray}
Then $\chi^x$ can be represented in the form
\begin{eqnarray}
&&\chi^x(v_0,v_1,v_2)
\\
\label{v1quadrat}
&&=\left(k_1(x,v_0)-\frac{h_2^2(x)}{h_1(x)}\right) v_1^2+\bigg(\sqrt{h_1(x) } \, v_2 -\frac{h_2(x)}{\sqrt{h_1(x)}} \, v_1\bigg)^2
\\
&&=
\label{v2quadrat}
\frac{h_1(x) \, k_1(x,v_0)-h_2^2(x)}{k_1(x,v_0)}\, v_2^2+\frac{1}{k_1(x,v_0)}\bigg(k_1(x,v_0) \, v_1 -h_2(x)  \, v_2\bigg)^2.
\end{eqnarray}
\end{lemma}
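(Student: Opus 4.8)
The plan is to treat the two groups of assertions separately. The strict positivity of the constants $K_1$ and $\widetilde K_1$ in (\ref{5.44g}) and (\ref{5.45g}) is the genuine content of the lemma and rests entirely on the strict inequality (\ref{41a}) established in Lemma \ref{neulemmaneueversion}. The two representations (\ref{v1quadrat}) and (\ref{v2quadrat}) of $\chi^x$, on the other hand, are purely algebraic identities obtained by completing the square in the two variables $v_1$ and $v_2$, and I would present them as direct verifications.

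For the positivity I would abbreviate $w = \bar u(x) + v_0$ and compute, after clearing the positive factor $h_1 = k$,
\[
h_1\Big( k_1(x,v_0) - \frac{h_2^2}{h_1}\Big) = h_1^2(a^2 - w^2) - 2 h_1 h_2 w - h_2^2 = \big( h_1(a - w) - h_2 \big)\big( h_1(a + w) + h_2 \big).
\]
The second factor is manifestly positive (all of $h_1$, $h_2$ and $a + w$ being positive for $v_0$ small), and at $v_0 = 0$ the first factor is strictly positive by exactly (\ref{41a}). Thus $k_1(x,0) - h_2^2(x)/h_1(x)>0$, whence also $k_1(x,0) > h_2^2(x)/h_1(x) \ge 0$, so $k_1(x,0)>0$ and consequently $(h_1 k_1(x,0) - h_2^2)/k_1(x,0) > 0$. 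Both integrands are continuous on the compact interval $[0,L]$ and strictly positive at $v_0=0$, hence bounded below by a positive constant there; by uniform continuity they remain positive on the whole slab $|v_0| \le \varepsilon_2$, $x \in [0,L]$, once $\varepsilon_2>0$ is taken small enough. Choosing $\varepsilon_2$ to be the smaller of the two thresholds gives (\ref{5.44g}) and (\ref{5.45g}) simultaneously, with $K_1$, $\widetilde K_1$ independent of the time horizon $T$ since $h_1$, $h_2$, $\bar u$ are.

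For the identities I would start from (\ref{fxdefinition}) and note that the coefficient of $v_1^2$ is exactly $h_1(a^2 - (\bar u + v_0)^2) - 2 h_2(\bar u + v_0) = k_1(x,v_0)$, so that
\[
\chi^x(v_0,v_1,v_2) = k_1(x,v_0)\, v_1^2 + h_1\, v_2^2 - 2 h_2\, v_1 v_2.
\]
Completing the square by viewing this as a quadratic in $v_2$ with leading coefficient $h_1$ yields (\ref{v1quadrat}); completing the square by viewing it instead as a quadratic in $v_1$ with leading coefficient $k_1(x,v_0)$ yields (\ref{v2quadrat}). The latter rearrangement requires $k_1(x,v_0) \ne 0$, which is exactly the positivity secured in the previous step. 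Each is a one-line expansion that I would simply check against the stated right-hand side.

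The main obstacle is the uniformity in the first part: inequality (\ref{41a}) is furnished only at the stationary profile, i.e. at $v_0 = 0$, whereas (\ref{5.44g}) and (\ref{5.45g}) demand strict positivity over an entire $v_0$-neighborhood and over all $x$ at once. What makes this work is the combination of the compactness of $[0,L]$ with the strictness of (\ref{41a}): the strict gap in (\ref{41a}) is precisely the reserve that survives the perturbation by $v_0$. By comparison the algebraic decompositions are routine, the only subtlety being that the division by $k_1$ in (\ref{v2quadrat}) must be legitimized, which is why the sign of $k_1$ has to be pinned down before that identity is written.
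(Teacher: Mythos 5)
Your proof is correct and takes essentially the same route as the paper's: positivity of $K_1$ and $\widetilde K_1$ is reduced to the strict inequality (\ref{41a}) at $v_0=0$ and then propagated to small $|v_0|$, uniformly in $x\in[0,L]$, by a continuity/compactness argument, after which (\ref{5.45g}) follows from (\ref{5.44g}), and the representations (\ref{v1quadrat}), (\ref{v2quadrat}) are obtained by completing the square, with the division by $k_1$ legitimized by the positivity just established. The only cosmetic difference is that you factor $h_1k_1-h_2^2=\bigl(h_1(a-w)-h_2\bigr)\bigl(h_1(a+w)+h_2\bigr)$ and read off the signs of the two factors, whereas the paper runs a chain of lower bounds on $k_1$ by substituting (\ref{41a}) directly; both arguments rest on exactly the same ingredients.
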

\begin{proof}

Inequality (\ref{41a}) implies that, if $\varepsilon_2>0$ is
sufficiently small and $|v_0|<\varepsilon_2$ we have
\begin{eqnarray*}
k_1(x,v_0) & = &
h_1(x) \, [a -(\bar u(x) + v_0)]\;  [a  + (\bar u(x) + v_0)] -2 \, h_2(x) \; [\bar u(x) + v_0]
\\
& > & ( h_2(x) - h_1(x) \, v_0) [a + \bar u(x) + v_0]
-2 \, h_2(x)  \; [\bar u(x) + v_0]
\\
& = & h_2(x) \, [a - \bar u(x)] -  [ (a + \bar u(x)) \, h_1(x) + h_2(x)] \, v_0  - h_1(x) \, v_0^2
\\
& > &
\frac{h_2^2(x)}{h_1(x)}
\end{eqnarray*}
which implies (\ref{5.44g}). This in turn implies  (\ref{5.45g}).


The representations (\ref{v1quadrat}) and (\ref{v2quadrat}) follow directly
from the definition of $\chi^x$.
%
%
%
%
%
%
%
\end{proof}

In the sequel we assume that the assumptions
from Lemma \ref{neulemmaneueversion}
hold.
With
$\chi^x$  as defined in Lemma  \ref{normlemma} we have
\begin{align}
\label{4.3} &
E_1(t)= \int_0^L \chi^x(u(t,x),  \, u_x(t,x), \, u_t(t,x))\,dx,
\\
\label{4.4} &E_2(t)=
\int_0^L \chi^x(u(t,x), \,  u_{xx}(t,x), \, u_{tx}(t,x))\,dx.
\end{align}


With these representations of $E_1$ and $E_2$,
Lemma \ref{normlemma} yields lower and upper bounds for $E_1(t)$ and $E_2(t)$.

\begin{lemma}
\label{h2lemma}
Assume that
\begin{equation}
\label{maxu} \max_{x\in [0,L]}  |u(t,x)|\leq \varepsilon_2
\end{equation}
where $\varepsilon_2$ is chosen as in Lemma \ref{normlemma}.
For $E_1$ defined in (\ref{4.3})
and $k_1$ defined in (\ref{k1definition})  we have
the lower bounds
\begin{align}\label{5.6}
E_1(t)\geq\int_0^L
\left(k_1(x,u(x))-\frac{h_2^2(x)}{h_1(x)}\right)u_x^2(x)\,dx\geq
K_1\int_0^L u_x^2\,dx,
\end{align}
and
\begin{align}\label{5.7}
E_1(t)\geq\int_0^L\frac{h_1(x) k_1(x,u) -
h_2^2(x)}{k_1(x,u)} \, u_t^2(x)\,dx\geq \widetilde{K}_1\int_0^L
u_t^2\,dx.
\end{align}
Moreover, we have the upper bounds
\begin{equation}
\label{5.38} E_1(t)
\leq\int_0^L\left(k_1(x,u(t,x))+\frac{h_2^2(x)}{h_1(x)}\right)u_x^2(t,x)+2 \, h_1(x) \,
u_t^2(t,x)\, dx
\end{equation}
and
\begin{equation}
\label{5.38a}
E_1(t) \leq \int_0^L 2 \, h_1(x)\, \left(a^2 - (\bar u + u)^2 \right) u_x^2(t,x) + 2 \, h_1(x) \, u_t^2(t,x) \, dx.
\end{equation}

For $E_2$ defined in (\ref{4.4}), we have
the lower bounds
\begin{align}\label{5.9}
E_2(t)\geq\int_0^L
\left(k_1(x,u(x))-\frac{h_2^2(x)}{h_1(x)}\right)u_{xx}^2(x)\, dx\geq
K_1\int_0^L u_{xx}^2\,dx,
\end{align}
and
\begin{align}\label{5.10}
E_2(t)\geq\int_0^L\frac{h_1(x) k_1(x,u) -
h_2^2(x)}{k_1(x,u)}u_{tx}^2(x)\,dx\geq \widetilde{K}_1\int_0^L
u_{tx}^2\,dx.
\end{align}
Moreover, we have the upper bounds
\begin{equation}
\label{5.39} E_2(t)
\leq\int_0^L\left(k_1(x,u(t,x))+\frac{h_2^2(x)}{h_1(x)}\right)u_{xx}^2(t,x)+2 \, h_1(x) \,
u_{tx}^2(t,x)\, dx.
\end{equation}
\begin{equation}
\label{5.38agugat}
E_2(t) \leq \int_0^L 2 h_1(x)\, \left(a^2 - (\bar u + u)^2 \right) u_{xx}^2(t,x) + 2 \, h_1(x) \, u_{tx}^2(t,x) \, dx.
\end{equation}
\end{lemma}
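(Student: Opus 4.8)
The plan is to derive all six estimates directly from the two algebraic representations of $\chi^x$ established in Lemma \ref{normlemma}, namely (\ref{v1quadrat}) and (\ref{v2quadrat}), combined with the representations (\ref{4.3}), (\ref{4.4}) of $E_1$ and $E_2$ as integrals of $\chi^x$ evaluated along the solution. The assumption (\ref{maxu}) guarantees $|u(t,x)|\leq \varepsilon_2$, so every evaluation $\chi^x(u,u_x,u_t)$ and $\chi^x(u,u_{xx},u_{tx})$ lies in the range for which the constants $K_1$ and $\widetilde K_1$ of (\ref{5.44g}), (\ref{5.45g}) are strictly positive; in particular $k_1(x,u)>0$, so the denominators appearing in (\ref{v2quadrat}) are admissible.

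For the lower bounds I would proceed as follows. Inserting $v_0=u$, $v_1=u_x$, $v_2=u_t$ into (\ref{v1quadrat}) and discarding the nonnegative square $\big(\sqrt{h_1}\,u_t-(h_2/\sqrt{h_1})\,u_x\big)^2$ gives the pointwise bound $\chi^x\geq \big(k_1(x,u)-h_2^2/h_1\big)\,u_x^2$; integrating over $[0,L]$ and invoking the definition of $K_1$ yields (\ref{5.6}). Symmetrically, the same substitution into (\ref{v2quadrat}) and dropping the nonnegative term $\frac{1}{k_1}\big(k_1 v_1-h_2 v_2\big)^2$ gives the lower bound (\ref{5.7}) via the definition of $\widetilde K_1$. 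The estimates (\ref{5.9}) and (\ref{5.10}) for $E_2$ follow verbatim after replacing $(u_x,u_t)$ by $(u_{xx},u_{tx})$ and using (\ref{4.4}).

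For the upper bounds I would start from the expanded form of $\chi^x$, which after collecting the two $u_x^2$-contributions via the definition (\ref{k1definition}) reads $\chi^x(u,u_x,u_t)=k_1(x,u)\,u_x^2+h_1\,u_t^2-2h_2\,u_x u_t$. The cross term is handled by the weighted Young inequality $2|h_2\,u_x u_t|\leq \frac{h_2^2}{h_1}\,u_x^2+h_1\,u_t^2$, which upon integration produces (\ref{5.38}); the second-order analogue (\ref{5.39}) is identical. To pass from (\ref{5.38}) to the cruder bound (\ref{5.38a}) I would only need the elementary chain $k_1+\frac{h_2^2}{h_1}<2k_1<2h_1\big(a^2-(\bar u+u)^2\big)$: the first inequality is precisely $h_2^2/h_1<k_1$, guaranteed by $K_1>0$, while the second holds because $k_1=h_1\big(a^2-(\bar u+u)^2\big)-2h_2(\bar u+u)$ with $h_2>0$ and $\bar u+u>0$ (for $\varepsilon_2$ small). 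The same reasoning yields (\ref{5.38agugat}) from (\ref{5.39}).

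Given Lemma \ref{normlemma}, none of these steps is genuinely hard; the only point requiring care is bookkeeping on which perfect square is discarded in each representation — the one isolating $u_x^2$ in (\ref{v1quadrat}) versus the one isolating $u_t^2$ in (\ref{v2quadrat}) — and checking that the positivity $K_1>0$, $\widetilde K_1>0$, hence the admissibility of dividing by $k_1(x,u)$, does hold under (\ref{maxu}). This is exactly what the choice of $\varepsilon_2$ in Lemma \ref{normlemma} secures, so the argument reduces to applying the two quadratic identities and one Young inequality.
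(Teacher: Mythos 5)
Your proposal is correct and follows essentially the same route as the paper's own proof: the lower bounds (\ref{5.6}), (\ref{5.7}), (\ref{5.9}), (\ref{5.10}) by dropping the perfect squares in the representations (\ref{v1quadrat}) and (\ref{v2quadrat}) and invoking (\ref{5.44g}), (\ref{5.45g}); the upper bounds (\ref{5.38}), (\ref{5.39}) by Young's inequality applied to the cross term $-2h_2 u_x u_t$; and the cruder bounds (\ref{5.38a}), (\ref{5.38agugat}) via $\frac{h_2^2}{h_1}<k_1$ together with the definition of $k_1$ and the sign condition $h_2\,(\bar u+u)\geq 0$. The paper states these steps tersely; your write-up merely fills in the same bookkeeping explicitly, so there is nothing substantive to compare.
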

\begin{proof}
%
%

Equation (\ref{v1quadrat}) and (\ref{5.44g}) imply the lower bound (\ref{5.6}) for $E_1$.

The representation  (\ref{v2quadrat}) and (\ref{5.45g}) imply the lower bound (\ref{5.7}).

The upper bound (\ref{5.38}) follows from (\ref{v1quadrat}) and  Young's inequality.
The upper bound (\ref{5.38a})  follows from   (\ref{5.38})
using
$\frac{h_2^2}{h_1} < k_1$
 and the definition of $k_1$.

The representations (\ref{v1quadrat}) and (\ref{5.44g}) also imply the lower bound (\ref{5.9}) for $E_2$,
and  (\ref{v2quadrat}) and (\ref{5.45g}) imply the lower bound (\ref{5.10}).
The upper bound (\ref{5.39}) again follows from (\ref{v1quadrat}) and  Young's inequality.
The upper bound (\ref{5.38agugat})  follows from   (\ref{5.39})
using
$\frac{h_2^2}{h_1} < k_1$
and the definition of $k_1$.
\end{proof}



Now we can show that
$E(t)$ can be bounded above and below by
the squared $H^2$-norm.
Define the number
\begin{equation}
\label{kmaxdefinition}
K_{\max} =
\max \left\{
2\,k,\; \max_{x\in[0,L]}\max_{|v_0|\leq\varepsilon_2}
k_1(x,v_0) + \frac{h_2^2(x)}{h_1(x)}
\right\}.
\end{equation}
If (\ref{maxu}) holds, by (\ref{5.38}) and (\ref{5.39}) Lemma \ref{h2lemma}
implies the inequality
\begin{equation}
\label{ober}
E(t)
\leq
K_{\max} \int_0^L u^2(t,x) + u^2_x(t,x) + u_t^2(t,x) +  u_{tx}^2(t,x) +  u_{xx}^2(t,x)\, dx.
\end{equation}
Define
\begin{equation}
\label{kmindefinition}
K_{\min} = \frac{1}{2} \;\min \{ K_1, \, \tilde K_1\}.
\end{equation}
By the definition of $E$ and (\ref{5.6}), (\ref{5.7}), (\ref{5.9}), (\ref{5.10})
we also have the lower bound
\begin{equation}
\label{unter}
E(t) \geq K_{\min }
\int_0^L  u^2_x(t,x) + u_t^2(t,x) +  u_{tx}^2(t,x) +  u_{xx}^2(t,x)\, dx.
\end{equation}
The Poincar\'e-Friedrichs inequality states that if (\ref{2.30}) holds,
we have
\begin{equation}
\label{poin} \int_0^L u^2(t,x)\, dx \leq 2 \, L^2 \, \int_0^L
u_x^2(t,x)\,dx.
\end{equation}
Using this inequality and (\ref{2.26}),
inequality  (\ref{unter}) implies that if
$E(t)$ is small, also the $H^2$-norm of $u(t,x)$ is small.
Similarly $E_1(t)$ can be bounded above and below by
the squared $H^1$-norm.

\subsection{Exponential Decay of the $H^2$-Lyapunov Function}
\label{sec4}

In this section we present our main result about the exponential decay of the Lyapunov function
that we have introduced in  (\ref{4.1}).
%
Consider the system
\begin{equation}
\label{revisionsystem}
\left\{
\begin{array}{l}
\tilde u_{tt} + 2 \, \tilde u \, \tilde u_{tx}-(a^2-\tilde u^2)
\, \tilde
u_{xx}=\tilde F(\tilde u,\tilde u_x,\tilde u_t),
\\
\tilde u_x(t, \, 0) =\bar u_x(0)+k \, \tilde u_t(t,\,0),\, t\in [0,T],
\\
\tilde u(t,\, L) =\bar u(L), \, t\in [0,T],
\\
t=0:\ \tilde u=\varphi(x) + \bar u(x),\ \tilde u_t=\psi(x),\ \ x\in[0,L]
\end{array}
\right.
\end{equation}
with $\tilde F$ as defined in (\ref{2.22}).
In Theorem \ref{thm4.2}
we present our main result about the stabilization of
(\ref{revisionsystem})
for $\tilde u$.
For the analysis we use the fact that (\ref{revisionsystem})
is equivalent to
 (\ref{2.26}),(\ref{2.28}),(\ref{2.29}),(\ref{2.30})
 that is stated
 in terms of $u$ which is defined
in  (\ref{udefinition2016}) as the difference between
$\tilde u$ and the stationary state $\bar u$.
In Theorem \ref{thm4.2} we state that
the function
$E(t)$  defined in (\ref{4.1}) is a strict Lyapunov function.
In  Theorem \ref{thm4.2} it is assumed
that $\bar u>0$ is sufficiently small and
$k$ is sufficiently large.
Before we state the theorem, in the following remark we
comment on  condition
(\ref{kpartialassumption})
that appears in the statement of the Theorem
and explain why it can be satisfied for all $a>0$
if $\bar u>0$ is sufficiently small and
$k$ is sufficiently large.

\begin{rem}
\label{voraussetzungserklaerung}
For $k>0$ and $\bar u_0>0$ define
\begin{equation}
\label{upsilondefinition}
K_{\partial}(k,  \,\bar u_0 )=
2\,
\left[
\frac{4}{k^2} +  \frac{2 \, \bar u_0}{k} +
\theta \, \frac{\bar u_0^4+3a^2\bar u_0^2 + \frac{2}{k} a^2 \bar u_0 }{2(a^2-\bar u_0^2)}
+
 \frac{5}{2}\,\frac{\theta}{k^2}   +
\frac{\theta}{k} \, \frac{3a^2\bar u_0-\bar u_0^3}{a^2-\bar u_0^2}
\right]^2.
\end{equation}
Then (\ref{upsilondefinition}) implies
\[
\lim_{\bar u_0 \rightarrow 0+}
K_{\partial}(k,  \,\bar u_0 )
=
\lim_{\bar u_0 \rightarrow 0+}
2\,
\left[
\frac{4}{k^2}
+  \frac{5}{2}\,\frac{\theta}{k^2}
\right]^2
 =
 \frac{2}{k^4}
\left[
4  +  \frac{5}{2}\, \theta \right]^2
.
\]
This in turn implies that
\[\lim_{\bar u_0 \rightarrow 0+}
2\, k^2\,K_{\partial}(k,  \,\bar u_0 )
-
\left[ a^2 - \left(\bar u_0 + \frac{2}{k} \right)^2 \right]
=
\frac{4}{k^2}\,
\left[
4   +  \frac{5}{2}\, \theta \right]^2 - a^2  + \frac{4}{k^2}
.\]
Hence we have
\[
\lim_{k\rightarrow \infty}
\lim_{\bar u_0 \rightarrow 0+}
2\, k^2\,K_{\partial}(k,  \,\bar u_0 )
-
\left[ a^2 - \left(\bar u_0 + \frac{2}{k} \right)^2 \right]
=
-a^2 <0.\]
This implies
that if $\bar u_0>0$ is sufficiently small and
$k$ is sufficiently large,
then
condition (\ref{kpartialassumption})
with $\bar u(0)= \bar u_0$
in  Theorem \ref{thm4.2} below
holds.
In fact, if ${\bar u_0}>0$ is sufficiently small, for
 $k= \frac{1}{\bar u_0}$ condition (\ref{kpartialassumption}) holds,
since
\[
\lim_{k\rightarrow \infty}
2\, k^2\,K_{\partial}\left(k,  \, \frac{1}{k} \right)
-
\left[ a^2 - \left( \frac{1}{k} + \frac{2}{k} \right)^2 \right]
= - a^2.
\]


\end{rem}

\begin{thm}
\label{thm4.2}
{\bf (Exponential Decay of the $H^2$-Lyapunov Function).}\label{thm1}
Let a real number $\gamma \in (0, \frac{1}{2}]$ be given.
%
%
%
%
%
%
%
%
%
Choose a real number $k>0$ such that
\begin{equation}
\label{c2c1assumptionneu}
\frac{1}{a\,k}
< 1 - \gamma.
\end{equation}
Let a stationary subsonic state $\bar u(x)\in C^2(0,L)$ be given
that satisfies (\ref{statvor}). 
Assume that for all $x\in L$ we have
$
   \bar u(x) \in\left(0, \,  \gamma \, a\right).
 $ 
%
Assume that
for $K_{\partial}(k,  \,\bar u_0 )$  as defined in (\ref{upsilondefinition})
we have
\begin{equation}
\label{kpartialassumption}
2 \, k^2 \, K_{\partial}(k,  \,\bar u( 0) )
\leq a^2 - \left(\bar u(0) + \frac{2}{k} \right)^2.
\end{equation}
Assume that
$\|\bar u\|_{C^2([0,L])}$ is sufficiently small
such that
$\|\bar u\|_{C([0,L])} < \varepsilon_1(2\, k^2)$
(with
$\varepsilon_1$ from Lemma \ref{neulemmaneueversion})
 and
(\ref{ubarvor2}) holds.

Let $T>0$ be given.
%
%
%
%
If  the initial data satisfies
\begin{align}\label{4.8}
\|(\varphi(x),\psi(x))\|_{C^2([0,L])\times C^1([0,L])}\leq\varepsilon_0(T)
\end{align}
and the
$C^2$-compatibility conditions at the points
$(t,x) = (0,0)$ and $(t, x)=(0,L)$.
the initial-boundary value problem
(\ref{revisionsystem}) for $\tilde u$
%
has  a unique classical solution
$\tilde u \in C^2([0,T]\times [0,L])$.
Problem (\ref{2.26}),(\ref{2.28}),(\ref{2.29}),(\ref{2.30})
has  a unique classical solution $u\in C^2([0,T]\times [0,L])$
that satisfies
the a priori estimate (\ref{apriori}).
Since
$\varepsilon_1(\upsilon)$ from Lemma \ref{neulemmaneueversion}
with  $\upsilon=2 \,k^2$
and $\varepsilon_2$ from Lemma
\ref{normlemma}
are independent of $T$,
we can choose the constant $\varepsilon_0(T) > 0$
from Lemma \ref{well-posedness}
sufficiently small
such that
the a priori estimate (\ref{apriori})
implies that
\begin{equation}
\label{uistklein}
\|u\|_{C([0,T]\times [0,L])} \leq
\min\{ \varepsilon_1( 2 \,k^2 ),\,  \varepsilon_2  \}
\end{equation}
and for all $t\in [0,T]$, $x\in [0,L]$ we have the inequalities
  \begin{equation}
  \label{i3vor}
  |u(t,\,x) | \leq
  \min \left\{ \gamma \, a - |\bar u(x)|,\,|\bar u(x)|, \,\frac{1}{k}\right\}\,
  \;
\mbox{\it and }\;
  |u_x(t,x) |\leq \min\left\{ 1, \, \frac{1}{k}\right\}.
  \end{equation}

Moreover, choose
$\varepsilon_0(T)$ and $\bar u$  sufficiently small  such that
\begin{eqnarray}
\nonumber
\kappa  & := &
\max_{t \in[0,T] }
(P_0(T_{Li}(t))+P_1(T_{Li}(t)))(1+L^2)\left(\frac{1}{K_1}+\frac{1}{\tilde
K_1}\right)
 + 2 \, k^2 \, C_{E1}(\bar u(0) ) \, \frac{  L}{K_1}
 \\
 \label{kappadefinition}
 &
\leq
&
 \frac{1}{4\,{\rm e} \, L\,  k }
\end{eqnarray}
with the functions
$P_0$, $P_1$ and  $C_{E1}$
defined  in (\ref{P_0}), (\ref{P_1}),  (\ref{ce1definition})
and
\begin{equation}
\label{T(t)}
T_{Li}(t)= \max_{x\in [0,L]} \left\{|u(t,x)|, |u_x(t,x)|,
|u_t(t,x)|,\, |\bar u(x)|, |\bar u'(x)|\right\}.
\end{equation}
Define the number
\begin{equation}
\label{mudefinition} \mu =   \frac{1}{2\,{\rm e} \, L\,  k } - \kappa
\geq \frac{1}{4 \,{\rm e} L k}.
\end{equation}
Then we have
\begin{equation}
\label{e1lyapunov}
E_1(t) \leq E_1(0) \, \exp\left(-
\mu \, t\right)\, \, \mbox{\rm for all }\; t\in [0, T],
\end{equation}
\begin{align}\label{4.9}
E(t)\leq E(0)  \, \exp\left(-
\mu \, t\right)\, \, \mbox{\rm for all }\; t\in [0, T]
\end{align}
that is $E_1(t)$ and $E(t)$ as defined in (\ref{4.1})
are
strict Lyapunov functions for our control system (\ref{revisionsystem}).
%
\end{thm}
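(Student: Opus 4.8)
The plan is to follow the strict--Lyapunov--function scheme of Coron--Bastin, adapted to the second-order equation (\ref{2.26}) and the Neumann feedback (\ref{2.29})--(\ref{2.30}). First I would invoke Lemma \ref{well-posedness} to produce the unique $C^2$ solution $u$ on $[0,T]\times[0,L]$ together with the a priori bound (\ref{apriori}); choosing $\varepsilon_0(T)$ small then forces (\ref{uistklein}) and (\ref{i3vor}), so that the argument $z = \bar u(x) + u(t,x)$ stays in the range $|z| \le 2\,\varepsilon_1(2k^2)$ where Lemma \ref{neulemmaneueversion} applies and where $k_1(x,u) - h_2^2/h_1 > 0$. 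This makes all subsequent manipulations legitimate for the classical solution, and it licenses the equivalence bounds of Lemmas \ref{normlemma} and \ref{h2lemma}.

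The core computation is to differentiate $E_1(t)$ in time. Using the representation (\ref{4.3}) and the product rule, the time derivative produces the factor $\frac{d}{dt}(a^2 - (\bar u + u)^2) = -2(\bar u + u)u_t$ together with the terms $2h_1(a^2 - (\bar u+u)^2)u_x u_{tx} + 2h_1 u_t u_{tt}$ and the corresponding $h_2$-contributions. I would eliminate $u_{tt}$ via (\ref{2.26}) and integrate by parts in $x$, exploiting that $h_1$ is constant while $h_2' = -\frac{1}{L}h_2$ (from (\ref{h1defi}), (\ref{h2defi})). The exponential weight is exactly what converts the interior integral into a term $-\frac{1}{L}\int_0^L h_2(x)(\cdots)\,dx$; bounding $h_2(x) \ge h_2(L) = 1/\mathrm e$ and using the upper bound (\ref{5.38a}) with $h_1 = k$ to replace the integrand by $E_1(t)$, this is $\le -\frac{1}{2\mathrm e L k}E_1(t)$, which is the source of the decay rate in (\ref{mudefinition}). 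The nonlinear and source contributions coming from $F$ in (\ref{2.27}) are of higher order: each is a product of a coefficient bounded through $T_{Li}(t)$ and the functions $P_0,P_1$ with a quadratic expression in the first derivatives, hence after the Poincar\'e inequality (\ref{poin}) and the $H^1$-equivalence (\ref{5.6})--(\ref{5.7}) they are dominated by $\kappa\,E_1(t)$.

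The delicate point is the two boundary contributions. At $x=L$ the condition (\ref{2.30}) gives $u_t(t,L)=0$, so the boundary term collapses, and I would insert the feedback law $u_x(t,0) = k\,u_t(t,0)$ at $x=0$; the resulting quadratic form in the boundary values is controlled by the positive-definite matrix $\tilde B_3(z)$ of (\ref{tildeb3definition}) evaluated at $z = \bar u(0) + u(t,0)$, where the choice $\upsilon = 2k^2 > k^2$ of (\ref{upsilonvorraussetzung}) is precisely what renders the feedback dissipative. This is also where the boundary value of the source must be absorbed, which is the role of $K_{\partial}$ in (\ref{upsilondefinition}) and of hypothesis (\ref{kpartialassumption}): it guarantees that the $F$-contribution at $x=0$ cannot overturn the sign provided by $\tilde B_3$. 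Collecting the interior and boundary estimates yields $E_1'(t) \le -(\frac{1}{2\mathrm e L k} - \kappa)E_1(t) = -\mu\,E_1(t)$, and Gr\"onwall's inequality gives (\ref{e1lyapunov}).

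Finally, for $E(t)$ I would repeat the whole computation for $E_2(t)$ in (\ref{4.4}). The preliminary step is to differentiate (\ref{2.26}) once in $x$, so that $(u_{tx}, u_{xx})$ solve a hyperbolic equation with the same principal part plus additional lower-order terms; the boundary data are obtained by differentiating (\ref{2.29}) in $t$, giving $u_{tx}(t,0) = k\,u_{tt}(t,0)$, and then eliminating $u_{tt}(t,0)$ through (\ref{2.26}) at $x=0$. I expect this elimination, together with the genuinely two-dimensional boundary form at $x=L$ in $u_{xx}(L),u_{tx}(L)$, to be the main obstacle; here the positive-definiteness of $\tilde A_3(z)$ in (\ref{tildea3definition}) does the work, the factor $1/\mathrm e = h_2(L)$ being exactly the weight at the right endpoint and the term $C_g(z)$ dominating the $F$-contribution, which via the blow-up (\ref{cgblowup}) is what forces $\bar u$ to be small. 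Once $E_2'(t) \le -\mu\,E_2(t)$ is obtained in complete analogy with $E_1$, using the upper bound (\ref{5.38agugat}) and the lower bounds (\ref{5.9})--(\ref{5.10}), adding the two estimates gives $E'(t) \le -\mu\,E(t)$ and hence (\ref{4.9}), so that both $E_1$ and $E = E_1 + E_2$ are strict Lyapunov functions for (\ref{revisionsystem}).
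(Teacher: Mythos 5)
Your toolkit is the right one, but two structural pieces of your plan are misplaced, and as written the argument would not close. First, you attach the matrix $\tilde B_3$, the constant $K_{\partial}$ and hypothesis (\ref{kpartialassumption}) to the boundary analysis of $E_1$ at $x=0$. In the actual proof the boundary term of $\frac{d}{dt}E_1$, namely $I_3$ in (\ref{5.17}), contains no $F$ at all: the source enters $\frac{d}{dt}E_1$ only through the interior integral $I_2$, since integration by parts produces boundary contributions in $u_x,u_t$ only. After inserting the feedback $u_t(t,0)=\frac{1}{k}u_x(t,0)$, the contribution at $x=0$ is the rank-one expression $I_3^0=\bigl(a^2-((\bar u_0+u(t,0))+\tfrac{1}{k})^2\bigr)u_x^2(t,0)\ge 0$, a simple sign argument via (\ref{i3vor}); no $2\times 2$ form arises, and $\tilde B_3$, which is a quadratic form in the pair $(u_{tx}(t,0),F(t,0))$, cannot even be applied to it. That pair appears only in the $E_2$ analysis, where $u_{xx}(t,0)$ must be eliminated through the PDE as in (\ref{hilfsgleichung15072016}) — this is where $\tilde B_3$ with $\upsilon=2k^2$, $K_{\partial}$, $C_{E1}$ and (\ref{kpartialassumption}) actually do their work, not in the $E_1$ estimate.

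Second, your closing step — prove $E_2'\le-\mu E_2$ ``in complete analogy'' with $E_1$ and then add — is not available, and the paper never proves it. The interior estimate for $\tilde I_2$ is only of the form (\ref{5.28}), i.e.\ bounded by $E_1(t)+E_2(t)$, because $F_x$ contains the lower-order quantities $u,u_x,u_t$. More importantly, the $E_2$ boundary computation generates $F^2$-terms at both ends which can only be absorbed by the $E_1$ boundary terms you propose to discard: at $x=L$, positive definiteness of $\tilde A_3$ yields $\tilde I_3^L\le C_g\,F(t,L)^2$, and (\ref{ftlschranke}) gives $C_g\,F(t,L)^2\le (a^2-\bar u_L^2)\,{\rm e}^{-1}u_x^2(t,L)=-I_3^L$, so only the combination $I_3^L+\tilde I_3^L\le 0$ closes; at $x=0$, positive definiteness of $\tilde B_3$ leaves the deficit $-2k^2F(t,0)^2$, which is absorbed by $I_3^0$ through (\ref{ft0inequality}) and (\ref{kpartialassumption}), up to the remainder $-2k^2C_{E1}\,u(t,0)^2\ge -2k^2C_{E1}\frac{L}{K_1}E_1(t)$ controlled by $E_1$, not $E_2$. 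If, as you suggest, the $x=L$ term of $E_1$ simply ``collapses'' and is dropped, the leftover quantity $C_g\,F(t,L)^2$, which behaves like $\bar u_L^2\,u_x^2(t,L)$, is a boundary trace that neither $E_1$ nor $E_2$ dominates, and the estimate breaks. The correct structure is to group the boundary terms as $(I_3^L+\tilde I_3^L)$ and $(I_3^0+\tilde I_3^0)$, and the differential inequality is obtained for $E_1$ alone and for the sum $E=E_1+E_2$ — not for $E_2$ separately.
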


\begin{rem}
Theorem \ref{thm4.2}
states that
if $\bar u>0$ is sufficiently small and
$k$ is sufficiently large
for sufficiently small initial data
the Lyapunov function
decays exponentially and the decay rate
is at least
$\mu_0 = \frac{1}{4 \,{\rm e} \, L \, k}$
which is independent of $\bar u$ and  $T$,
since  the conditions on $k$ do not depend on
$T$.
For
arbitrarily large $T$,
we can always achieve this decay rate
$\mu_0$  for sufficiently small initial
data.
With this decay rate,
it is possible to determine
a time $T_0>0$ when the size of the $H^2 \times  H^1$--norm
of the solution is reduced at least by a factor 1/3.
In fact let
\begin{equation}
\label{Tschranke0305}
T_0 =   4 \,{\rm e} \,L \, k {\,{\rm ln}\left(  9 \,(1+2 \, L^2) \; \frac{K_{\max} }{K_{\min}}  \right)}
\end{equation}
with $K_{\max}$ from (\ref{kmaxdefinition}) and $K_{\min}$ from (\ref{kmindefinition}).
Then due to (\ref{unter}) and (\ref{poin}) we have
\begin{eqnarray*}
& & \|(u(T_0,\cdot), \, u_t(T_0,\cdot))\|_{H^2(0,L)\times H^1(0,L)}^2
\\
& = &
\int_0^L u^2(T_0,x) + u^2_x(T_0,x) +  u_{xx}^2(T_0,x) + u_t^2(T_0,x) +  u_{tx}^2(T_0,x) \, dx
\\
& \leq &
\frac{1 + 2\, L^2}{K_{\min}}\, E(T_0).
\end{eqnarray*}
If the assuption of Theorem \ref{thm4.2} hold for
the time interval $[0,\, T_0]$, by
(\ref{4.9}) and (\ref{ober})  this yields
\begin{eqnarray*}
& & \|(u(T_0,\cdot), \, u_t(T_0,\cdot))\|_{H^2(0,L)\times H^1(0,L)}^2
\\
& \leq &
\frac{1 + 2\, L^2}{K_{\min}}\, E(0)  \, \exp\left(-
\mu_0 \, T_0\right)
\\
& \leq &
(1 + 2\, L^2)
\frac{K_{\max}}{K_{\min}} \,
 \, \exp\left(-
\mu_0 \, T_0\right)
\,
\|(\varphi(x),\psi(x))\|^2_{H^2(0,L)\times
H^1(0,L)}
\\
& = &
\frac{1}{9} \;
\|(\varphi(x),\psi(x))\|^2_{H^2(0,L)\times
H^1(0,L)}
\\
& = &
\frac{1}{9} \;
\|(u(0,\,\cdot),u_t(0,\,\cdot))\|^2_{H^2(0,L)\times
H^1(0,L)}
.
\end{eqnarray*}
\end{rem}

\begin{corollary}
\label{cor4.3}
{\bf (Exponential Decay of the $H^2$-Norm and the $C^1$-Norm)}\label{cor1}
Under the assumptions of Theorem \ref{thm4.2},
for the semi-global classical solution $u$
of the mixed initial-boundary value problem
(\ref{2.26}), (\ref{2.28}), (\ref{2.29}), (\ref{2.30}) the $H^2$-norm
decays exponentially with time on $[0,T]$. More precisely, there
exists a constant $\eta_1 > 0$
that is independent of $T$ such that for any $t\in[0, T]$ the
inequality
\begin{equation}\label{4.10}
\|(u(t,\cdot),u_t(t,\cdot))\|_{H^2(0,L)\times
H^1(0,L)}\leq\eta_1\|(\varphi(x),\psi(x))\|_{H^2(0,L)\times
H^1(0,L)}\exp\left(-\frac{\mu}2\, t\right)
\end{equation}
holds. Furthermore, there exists a constant $\eta_2>0$
that is independent of $T$
such that for
any $t\in[0,T]$ the $C^1$-norm of the solution satisfies
\begin{align}\label{4.11}
\|(u(t,\cdot),u_t(t,\cdot))\|_{C^1[0,L]\times
C^0[0,L]}\leq \eta_2\|(\varphi(x),\psi(x))\|_{H^2(0,L)\times
H^1(0,L)}\exp\left(-\frac{\mu}2 \,t\right).
\end{align}
Due to (\ref{mudefinition}), this implies
that for $T$ sufficiently large we have
\begin{equation}\label{4.1002.05a}
\|(u(T,\cdot),u_t(T,\cdot))\|_{H^2(0,L)\times
H^1(0,L)}\leq
\frac{1}{2}
\|(\varphi(x),\psi(x))\|_{H^2(0,L)\times
H^1(0,L)}
\end{equation}
\mbox{ and}
\begin{equation}\label{4.1002.05b}
\|(u(T,\cdot),u_t(T,\cdot))\|_{C^1[0,L]\times
C^0[0,L]}\leq
\frac{1}{2}
\|(\varphi(x),\psi(x))\|_{H^2(0,L)\times
H^1(0,L)}.
%
\end{equation}
\end{corollary}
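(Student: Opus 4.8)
The plan is to read off both norm estimates directly from the exponential decay of the Lyapunov function established in Theorem~\ref{thm4.2}, exploiting the two-sided bounds between $E(t)$ and the squared $H^2\times H^1$-norm assembled in (\ref{ober}) and (\ref{unter}) together with the Poincar\'e--Friedrichs inequality (\ref{poin}) and, for the $C^1$-statement, the one-dimensional Sobolev embedding $H^1(0,L)\hookrightarrow C^0([0,L])$.

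First I would prove the $H^2$-estimate (\ref{4.10}). Since Theorem~\ref{thm4.2} guarantees (\ref{uistklein}), the smallness condition (\ref{maxu}) holds for all $t\in[0,T]$, so the bounds of Lemma~\ref{h2lemma} and hence (\ref{ober}) and (\ref{unter}) are available. Starting from the lower bound (\ref{unter}) and bringing in the $u^2$-term through (\ref{poin}) yields
\[
\|(u(t,\cdot),u_t(t,\cdot))\|_{H^2\times H^1}^2 \leq \frac{1+2L^2}{K_{\min}}\, E(t),
\]
exactly as in the preceding remark. Evaluating the upper bound (\ref{ober}) at $t=0$ and using $u(0,\cdot)=\varphi$, $u_t(0,\cdot)=\psi$ gives $E(0)\leq K_{\max}\,\|(\varphi,\psi)\|_{H^2\times H^1}^2$. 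Inserting the decay estimate (\ref{4.9}) and taking square roots produces (\ref{4.10}) with
\[
\eta_1 = \sqrt{\frac{(1+2L^2)\,K_{\max}}{K_{\min}}}.
\]
Because $K_{\max}$, $K_{\min}$ and $L$ are independent of $T$ (as emphasized for Lemmas~\ref{neulemmaneueversion} and \ref{normlemma}), so is $\eta_1$.

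Next, for the $C^1$-estimate (\ref{4.11}) I would use the embedding constant $C_S=C_S(L)$ with $\|v\|_{C^0([0,L])}\leq C_S\,\|v\|_{H^1(0,L)}$ for every $v\in H^1(0,L)$. Applying this to $u$, to $u_x$ (so that $\max_x|u_x|\leq C_S\|u_x\|_{H^1}\leq C_S\|u\|_{H^2}$), and to $u_t$ separately gives
\[
\|(u(t,\cdot),u_t(t,\cdot))\|_{C^1\times C^0}\leq C_S'\,\|(u(t,\cdot),u_t(t,\cdot))\|_{H^2\times H^1}
\]
for a constant $C_S'$ depending only on $L$, and combining this with (\ref{4.10}) yields (\ref{4.11}) with $\eta_2=C_S'\,\eta_1$, again independent of $T$. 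Finally, for (\ref{4.1002.05a}) and (\ref{4.1002.05b}) I would invoke the lower bound $\mu\geq\mu_0=\frac{1}{4\,{\rm e}\,L\,k}$ from (\ref{mudefinition}); setting $t=T$ in (\ref{4.10}) and (\ref{4.11}), the prefactors satisfy $\eta_i\exp(-\tfrac{\mu}{2}T)\leq \eta_i\exp(-\tfrac{\mu_0}{2}T)$, so choosing $T\geq \frac{2}{\mu_0}\,{\rm ln}\big(2\max\{\eta_1,\eta_2\}\big)$ forces both prefactors below $\tfrac12$, which is precisely the claim.

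I do not expect a genuine obstacle, since the substantive work already resides in Theorem~\ref{thm4.2} and in the norm-equivalence inequalities of Section~\ref{equivalencesection}. The one point demanding care is the bookkeeping of constants: one must check that every quantity entering $\eta_1$ and $\eta_2$ --- the equivalence constants $K_{\min}$, $K_{\max}$, the Poincar\'e constant $2L^2$, and the Sobolev constant $C_S$ --- depends only on $L$, $k$, $a$ and $\bar u$ and not on $T$. This uniformity is what makes both $\eta_1,\eta_2$ $T$-independent and, via the $T$-independent rate $\mu_0$, permits the reduction to a contraction factor $\tfrac12$ for all sufficiently large $T$.
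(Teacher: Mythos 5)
Your proposal is correct and follows essentially the same route as the paper's own proof: the lower bound (\ref{unter}) with the Poincar\'e--Friedrichs inequality (\ref{poin}), the upper bound (\ref{ober}) at $t=0$, the decay estimate (\ref{4.9}), the Sobolev embeddings $H^2(0,L)\hookrightarrow C^1([0,L])$ and $H^1(0,L)\hookrightarrow C^0([0,L])$, and finally the $T$-independent rate $\mu_0=\frac{1}{4\,{\rm e}\,L\,k}$ to force the prefactors below $\tfrac12$ for large $T$ (your threshold $T\geq \frac{2}{\mu_0}\,{\rm ln}(2\max\{\eta_1,\eta_2\})$ coincides with the paper's $8\,{\rm e}\,L\,k\,{\rm ln}(2\eta_i)$). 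The only differences are cosmetic constants, e.g.\ your $\eta_1=\sqrt{(1+2L^2)K_{\max}/K_{\min}}$ versus the paper's slightly more generous $\eta_1=2\sqrt{(1+2L^2)K_{\max}/K_{\min}}$.
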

The proofs of Theorem \ref{thm4.2} and Corollary \ref{cor4.3} are given in Section \ref{sec5}.


\section{Proofs of Theorem \ref{thm4.2} and Corollary \ref{cor4.3}}
\label{sec5}
In this section we prove Theorem \ref{thm4.2} and Corollary \ref{cor4.3} from Section
\ref{sec4}.
For the proof, we consider the time derivative of
the Laypunov function $E(t)$.


\subsection{Time derivative of the Lyapunov function}
First we consider the evaluation of the time derivative of the Lyapunov function $E(t)$.

\begin{lemma}
\label{lemmaE1}
Let the assumptions of Theorem \ref{thm4.2} hold.
Then the time-derivative of $E_1$ is given by the
following
 equation:
\begin{eqnarray*}
\frac{d}{dt}E_1(t)
& = & I_1 + I_2 + I_3
\end{eqnarray*}
with
\begin{eqnarray}
\label{5.15}
I_1 &= &
\int_0^L h_{2x} \, (a^2-(\bar u+u)^2) \,  u_x^2  +
 h_{2x} \, u_t^2 \,dx,
\\
\label{5.16}
I_2& = &
\int_0^L
2 \, h_1 (\bar u'+u_x) \, u_t^2
-2\, h_1\, (\bar u+u)u_t \, u_x^2+ 4 \, h_1(\bar u+u)(\bar u'+u_x) u_x  u_t
\nonumber
\\
& + & 2 \, h_1  \, F \, u_t
 -2 \, h_2 \, u_t \, u_x^2 - 2 \, h_2 \, (\bar u+u)(\bar u'+u_x) u_x^2-2 \, h_2 \, F \, u_x \,dx,
 \\
\label{5.17}
I_3 &= &
[(a^2-(\bar u+u)^2)(2 \, h_1 \, u_x \, u_t-h_2 \, u_x^2)-(2\, h_1\, (\bar u+u)+h_2) \,u_t^2]_{x=0}^L.
\end{eqnarray}

%

\end{lemma}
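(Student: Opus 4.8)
The plan is to differentiate $E_1(t)$ under the integral sign—which is legitimate since the solution $u$ is $C^2$ on $[0,T]\times[0,L]$ by Lemma \ref{well-posedness}—and then use the equation (\ref{2.26}) to eliminate the second-order time derivative $u_{tt}$ that appears. Because the weights $h_1$, $h_2$ in (\ref{h1defi}), (\ref{h2defi}) depend only on $x$ (indeed $h_1$ is constant) and $\bar u$ is time-independent, differentiating the integrand of $E_1$ produces only terms in $u_t$, $u_{tx}$, $u_{tt}$ together with $u_x$ and $u$. Gathering these, the $u_{tt}$-contributions occur precisely in $2 h_1\, u_t\, u_{tt}$ and $-2 h_2\, u_x\, u_{tt}$.

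The next step is to substitute $u_{tt}=-2(\bar u+u)u_{tx}+(a^2-(\bar u+u)^2)u_{xx}+F$ from (\ref{2.26}). The first key observation is that the resulting $u_{tx}$-term weighted by $h_2(\bar u+u)$ cancels against the $-4 h_2(\bar u+u)u_x u_{tx}$ term already produced by the differentiation. What remains are terms carrying $u_{xx}$ and $u_{tx}$, which must be removed by integration by parts in $x$, thereby converting all genuinely second-order-in-space terms into first-order terms plus boundary contributions.

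Concretely, I would integrate by parts the $h_1$-weighted term $2 h_1(a^2-(\bar u+u)^2)u_t\, u_{xx}$: its interior part yields $-2 h_1(a^2-(\bar u+u)^2)u_x u_{tx}$, which is the second crucial cancellation, annihilating the $+2 h_1(a^2-(\bar u+u)^2)u_x u_{tx}$ term coming from the original differentiation; the remaining interior piece $4 h_1(\bar u+u)(\bar u'+u_x)u_x u_t$ feeds into $I_2$, and the boundary part into $I_3$. For the $h_2$-weighted term $-2 h_2(a^2-(\bar u+u)^2)u_x u_{xx}$ I would write $u_x u_{xx}=\tfrac12(u_x^2)_x$ and integrate by parts, which generates exactly the $h_{2x}(a^2-(\bar u+u)^2)u_x^2$ term of $I_1$, an interior term for $I_2$, and a boundary term for $I_3$. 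The two surviving $u_{tx}$-terms, $-2 h_2\, u_t u_{tx}$ and $-4 h_1(\bar u+u)u_t u_{tx}$, are handled via $u_t u_{tx}=\tfrac12(u_t^2)_x$; integration by parts then produces the $h_{2x}u_t^2$ term of $I_1$, the $2 h_1(\bar u'+u_x)u_t^2$ term of $I_2$, and two further boundary contributions.

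Finally I would collect all four boundary terms and check that they combine into the stated $I_3=[(a^2-(\bar u+u)^2)(2 h_1 u_x u_t - h_2 u_x^2)-(2 h_1(\bar u+u)+h_2)u_t^2]_{x=0}^{L}$; note that the boundary conditions (\ref{2.29}), (\ref{2.30}) are deliberately \emph{not} inserted here, so $I_3$ is left in raw boundary form. The computation is mechanical once the two cancellations are secured; the only point requiring care is the bookkeeping, namely tracking the roughly dozen terms through the two integrations by parts and confirming that both the $h_1(a^2-(\bar u+u)^2)u_x u_{tx}$ and the $h_2(\bar u+u)u_x u_{tx}$ contributions cancel exactly, so that no second-order spatial derivatives survive in $I_1$ or $I_2$.
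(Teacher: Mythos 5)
Your proposal is correct and follows essentially the same route as the paper: differentiate $E_1$ under the integral, eliminate $u_{tt}$ via (\ref{2.26}), integrate by parts to remove $u_{xx}$ and $u_{tx}$, and sort the results into the $h_{2x}$-terms ($I_1$), the remaining interior terms ($I_2$), and the raw boundary terms ($I_3$). The only difference is immaterial bookkeeping order—the paper first integrates by parts the term $2h_1(a^2-(\bar u+u)^2)u_xu_{xt}$ and then substitutes the PDE in the form $u_{tt}-(a^2-(\bar u+u)^2)u_{xx}=F-2(\bar u+u)u_{tx}$, whereas you substitute first and integrate by parts afterwards; both yield exactly the same cancellations and boundary contributions.
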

\begin{proof}


With the notation $\hat d = a^2-(\bar u+u)^2 $
we have
$\hat d_t = - 2 (\bar u+u) \, u_t$,

$\hat d_x = - 2 (\bar u+  u ) \,(\bar u'+  u_x ) $
 and
\[
E_1(t)=
\int_0^L
h_1 \Big(  \hat d\, u_x^2 +  u_t^2\Big)-2 \,  h_2 \,
\Big((\bar u+u)u_x^2+u_t \, u_x\Big)\,dx.
\]
Hence differentiation yields
\begin{eqnarray*}
\frac{d}{dt}E_1(t) & = &
\int_0^L
2\,h_1 \Big[
(u_{tt}
-
(\bar u + u) \, u_x^2 )
\, u_t +
\hat d\,  u_x\, u_{xt}
\Big]
\\
&
-
& 2\,  h_2 \,
\Big[u_t \, u_x^2 + (  u_{tt} + 2 (\bar u+u)\, u_{xt})u_x
 +   u_t\, u_{xt}  \Big]\,dx.
\end{eqnarray*}
Now integration  by parts
for the term  $\hat d\,  u_x\, u_{xt} = \left(\hat d\,  u_x \right)\, \left(u_t\right)_x$
yields the equation
\begin{eqnarray*}
\frac{d}{dt}E_1(t) & = &
\int_0^L
2\,h_1 \Big[u_{tt}
-
\hat d\,  u_{xx}
-
\hat d_x\,  u_x\,
-
(\bar u + u) \, u_x^2
\Big]\, u_t
\\
&
-
& 2\,  h_2 \,
\Big[u_t \, u_x^2 + (  u_{tt} + 2 (\bar u+u)\, u_{xt})u_x
 +   u_t\, u_{xt}  \Big]\,dx
 +
\left[ 2\,h_1\, \hat d\,  u_x\, u_{t}\right]_{x=0}^L.
\end{eqnarray*}
%
Hence we get  the equation
\begin{align*}
&\frac{d}{dt}E_1(t)
= \int_0^L 2 \, h_1
\left[(u_{tt}- \hat d\, u_{xx})\, u_t
-\hat d_x
\, u_x  \, u_t
-(\bar u+u)  u_x^2 u_t \right]
\\
&\ \ \ \ \ \ \
-2 \, h_2 \,
\left[ u_t u_x^2+(u_{tt}+2(\bar
u+u)u_{tx})u_x + u_t u_{tx}
\right]
\, dx
\\
&\ \ \ \ \ \ \
+
\left[2h_1 \, \hat d \, u_x \, u_t\right]_{x=0}^L.
\end{align*}
By the partial differential equation (\ref{2.26})
we
have
$u_{tt}- \hat d\, u_{xx}
=
F-2(\bar u+u)u_{tx}
$
and
obtain
\begin{align}\label{5.14}
\frac{d}{dt} E_1(t)
& = \int_0^L 2h_1
\left[(F-2(\bar u+u)u_{tx})u_t+2
(\bar u+u)(\bar u'+u_x) u_t u_x - (\bar u+u) u_t \, u_x^2
\right]\nonumber\\
&\ \ \ \ \ \ \
-2 h_2
\left[
(F+
\hat d\,
u_{xx})u_x + u_t \, u_x^2 + u_t  \, u_{tx}
\right] \, dx
+
\left[
2 \, h_1 \,
\hat d
\,
 u_x
 \,
  u_t
\right]_{x=0}^L.
\nonumber
\\
\end{align}
Using integration by parts we obtain the identities
\begin{eqnarray*}
\int_0^L -4h_1(\bar u+u)u_t u_{tx} \, dx & = &
\int_0^L - 2 h_1(\bar u+u) (u_t^2)_x \, dx
\\
& = & [-2h_1 u_t^2(\bar u+u)]_{x=0}^L +\int_0^L 2\, (h_1(\bar
u+u))_xu_t^2\, dx
\end{eqnarray*}
and
\begin{eqnarray*}
\int_0^L -2 \, h_2 \, \hat d\,u_x u_{xx} - 2 \, h_2 \, u_t \, u_{tx} \, dx  & = &
\int_0^L - h_2   \, \hat d\, (u_x^2)_x  - h_2 \, (u_t)^2_x\, dx
\end{eqnarray*}
\begin{eqnarray*}
& = &
[-h_2  \, \hat d\, u_x^2  - h_2 \, u_t^2]_{x=0}^L
 +
\int_0^L h_{2x}   \, \hat d\, u_x^2
- 2 \, h_2(\bar
u+u)(\bar u'+u_x) \, u_x^2 + h_{2x} \, (u_t)^2\, dx.
\end{eqnarray*}
Using these identities we obtain the equation
$\frac{d}{dt} E_1(t) =I_1 + I_2 + I_3$.
Here, $I_3$ contains all the terms coming from the boundary
and $I_1= \int_0^L h_{2x} \,\hat d\, u_x^2 + h_{2x} \,u_t^2 \, dx$
contains all the terms where $h_{2x}$ appears.
The remaining terms appear in $I_2$.
%
%
\end{proof}


Similarly the next lemma is proved, where the time derivative of $E_2$ is considered.
\begin{lemma}
\label{lemmaE2}
Let the assumption of Theorem \ref{thm4.2} hold.
Then the following equation holds:
\begin{eqnarray}
\label{5.23}
\frac{d}{dt}
E_2(t)
& = &
\tilde{I}_1+\tilde{I}_2+\tilde{I}_3
\end{eqnarray}
with
\begin{eqnarray}
\label{5.24} \tilde{I}_1& = &\int_0^L h_{2x}\,(a^2-(\bar u+u)^2)\,
 u_{xx}^2 +
h_{2x} \, u_{tx}^2  \, dx,
\\
\label{5.25} \tilde{I}_2 &= & \int_0^L
4\,  h_2 \,(\bar u'+ u_x)\, u_{xx} \, u_{tx}
-2 \,   h_2 \,  u_t \,  u_{xx}^2 + 2 \, h_2 \,
(\bar u+u)\, (\bar u'+u_x) \, u_{xx}^2 \nonumber\\
& - & 2 h_2 \, F_x  \,  u_{xx} -2  h_1 \, (\bar u+u) \, u_t
\, u_{xx}^2- 2  h_1 \, (\bar u'+u_x) \, u_{tx}^2+2  h_1 \, F_x \,  u_{tx}\,dx,
\\
\label{5.26} \tilde{I}_3
&= &
[(a^2-(\bar u+u)^2)(2 h_1 \, u_{xx} \, u_{tx}- h_2 \, u_{xx}^2)-
(2\, h_1 \,
(\bar u+u)+h_2) u_{tx}^2]_{x=0}^L.
\end{eqnarray}
\end{lemma}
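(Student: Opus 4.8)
The plan is to mimic the proof of Lemma~\ref{lemmaE1}, exploiting that $E_2$ arises from $E_1$ by replacing $u_x$ with $u_{xx}$ and $u_t$ with $u_{tx}$ in the quadratic positions, while the coefficients $\hat d=a^2-(\bar u+u)^2$ and $(\bar u+u)$ are left unchanged. As there, I record $\hat d_t=-2(\bar u+u)\,u_t$ and $\hat d_x=-2(\bar u+u)(\bar u'+u_x)$, and I use that $h_1\equiv|k|$ is constant, so $h_{1x}=0$. Differentiating
\[
E_2(t)=\int_0^L h_1\big(\hat d\,u_{xx}^2+u_{tx}^2\big)-2\,h_2\big((\bar u+u)u_{xx}^2+u_{tx}u_{xx}\big)\,dx
\]
in time and inserting $\hat d_t$ yields, besides the terms $-2h_1(\bar u+u)u_tu_{xx}^2-2h_2u_tu_{xx}^2$ that already belong to $\tilde I_2$, the mixed second-order contributions $2h_1\hat d\,u_{xx}u_{xxt}+2h_1u_{tx}u_{ttx}$ from the $h_1$-block and $-4h_2(\bar u+u)u_{xx}u_{xxt}-2h_2u_{ttx}u_{xx}-2h_2u_{tx}u_{xxt}$ from the $h_2$-block.

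The one genuinely new ingredient compared with Lemma~\ref{lemmaE1} is that I differentiate the wave equation~(\ref{2.26}) once in $x$ to obtain an evolution equation for the second derivatives,
\[
u_{ttx}-\hat d\,u_{xxx}=F_x-2(\bar u'+u_x)u_{tx}-2(\bar u+u)u_{txx}+\hat d_x\,u_{xx},
\]
which plays exactly the role that $u_{tt}-\hat d\,u_{xx}=F-2(\bar u+u)u_{tx}$ played in the first-order case.

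Next I would integrate by parts in $x$ the four mixed-derivative terms above, using $u_{xxt}=u_{txx}$ and $h_1$ constant. Integrating $2h_1\hat d\,u_{xx}u_{xxt}=2h_1(\hat d u_{xx})(u_{tx})_x$ by parts and combining the resulting $-2h_1\hat d\,u_{xxx}u_{tx}$ with $2h_1u_{tx}u_{ttx}$ lets me insert the differentiated PDE; the $\hat d_x u_{xx}u_{tx}$ contributions then cancel, and a further integration by parts of $-4h_1(\bar u+u)u_{tx}u_{txx}=-2h_1(\bar u+u)\partial_x(u_{tx}^2)$ turns the coefficient $-4h_1(\bar u'+u_x)u_{tx}^2$ into the $-2h_1(\bar u'+u_x)u_{tx}^2$ appearing in $\tilde I_2$. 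For the $h_2$-block I rewrite $-2h_2u_{xx}\big(u_{ttx}+2(\bar u+u)u_{txx}\big)$ via the differentiated PDE and integrate $-2h_2\hat d\,u_{xxx}u_{xx}-2h_2u_{tx}u_{xxt}=-h_2\hat d\,\partial_x(u_{xx}^2)-h_2\partial_x(u_{tx}^2)$ by parts; since $\partial_x(h_2\hat d)=h_{2x}\hat d-2h_2(\bar u+u)(\bar u'+u_x)$, this produces precisely the $h_{2x}\hat d\,u_{xx}^2+h_{2x}u_{tx}^2$ terms of $\tilde I_1$ together with a $-2h_2(\bar u+u)(\bar u'+u_x)u_{xx}^2$ term that combines with the $+4h_2(\bar u+u)(\bar u'+u_x)u_{xx}^2$ coming from $-2h_2\hat d_x u_{xx}^2$ to give the coefficient $2$ in $\tilde I_2$.

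Finally I collect the terms. All boundary contributions from the integrations by parts, namely $[2h_1\hat d\,u_{xx}u_{tx}-2h_1(\bar u+u)u_{tx}^2-h_2\hat d\,u_{xx}^2-h_2u_{tx}^2]_0^L$, reassemble into $\tilde I_3$; the $h_{2x}$-terms form $\tilde I_1$; everything else is $\tilde I_2$. The main obstacle is purely the bookkeeping: one must track the cancellation of the $\hat d_x u_{xx}u_{tx}$ terms and verify that the two separate $(\bar u+u)(\bar u'+u_x)u_{xx}^2$ contributions combine to the stated coefficient. This is precisely the place where the extra $\hat d_x\,u_{xx}$ term generated by differentiating the PDE in $x$ enters, so it must be accounted for correctly rather than treated by a blind analogy with the $E_1$ computation.
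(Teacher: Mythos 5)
Your proposal is correct and follows essentially the same route as the paper's proof: differentiate $E_2$ in time, integrate the term $h_1\hat d\,u_{xx}u_{xxt}$ by parts, substitute the $x$-differentiated PDE $u_{ttx}-\hat d\,u_{xxx}=\hat d_x u_{xx}+F_x-2(\bar u'+u_x)u_{tx}-2(\bar u+u)u_{txx}$ (with the resulting cancellation of the $\hat d_x u_{xx}u_{tx}$ terms), and then integrate by parts the remaining $h_2$- and $h_1$-blocks so that the $h_{2x}$ terms form $\tilde I_1$, the boundary terms form $\tilde I_3$, and the rest forms $\tilde I_2$. The only difference is a trivial reorganization of where the $-2h_2\hat d_x u_{xx}^2$ contribution is absorbed, which yields the same coefficient $+2h_2(\bar u+u)(\bar u'+u_x)u_{xx}^2$ as in the paper.
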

\begin{proof}
Again using  the notation $\hat d = a^2-(\bar u+u)^2 $
we have
\begin{eqnarray*}
\frac{d}{dt}E_2(t) & = &
\int_0^L
2\,h_1 \Big[
\, u_{ttx} \,  u_{tx}
-
(\bar u + u) \, u_{xx}^2 \, u_t +
\hat d\,  u_{xx}\, u_{xxt}
\Big]
\\
&
-
& 2\,  h_2 \,
\Big[u_{t} \, u_{xx}^2 + (  u_{ttx} + 2 (\bar u+u)\, u_{xxt})\, u_{xx}
 +   u_{tx}\, u_{txx}  \Big]\,dx.
\end{eqnarray*}
Integration  by parts
for the term  $\hat d\,  u_{xx}\, u_{xxt} = \left(\hat d\,  u_{xx} \right)\, \left(u_{tx}\right)_x$
yields the equation
\begin{eqnarray*}
\frac{d}{dt}E_2(t) & = &
\int_0^L
2\,h_1 \Big[u_{ttx}\, u_{tx}
-
(\bar u + u) \, u_{xx}^2\,  u_t\,
-
\hat d\,  u_{xxx} \,u_{tx}
-
\hat d_x\,  u_{xx} \,u_{tx}
\Big]
dx
\\
&
+
&
\left[ 2\,h_1\, \hat d\,  u_{xx}\, u_{tx}\right]_{x=0}^L
\\
&
-
&\int_0^L 2\,  h_2 \,
\Big[u_{t} \, u_{xx}^2 + (  u_{ttx} + 2 (\bar u+u)\, u_{xxt})\, u_{xx}
 +   u_{tx}\, u_{txx}  \Big]
\,dx.
\end{eqnarray*}
Hence we get  the equation
\begin{eqnarray*}
\frac{d}{dt}E_2(t)
&=&
\int_0^L
2\,h_1 \Big[
(u_{ttx}
-
\hat d\,  u_{xxx})
\, u_{tx}
-
(\bar u + u) \, u_{xx}^2\,  u_t\,
-
\hat d_x\,  u_{xx} \,u_{tx}
\Big]
dx
\\
&
+
&
\left[ 2\,h_1\, \hat d\,  u_{xx}\, u_{tx}\right]_{x=0}^L
\\
&
-
&
\int_0^L 2\,  h_2 \,
\Big[u_{t} \, u_{xx}^2 + (  u_{ttx} + 2 (\bar u+u)\, u_{xxt})\, u_{xx}
 +   u_{tx}\, u_{txx}  \Big]
\,dx.
\end{eqnarray*}
By the partial differential equation (\ref{2.26})
we
have
\[
u_{ttx}- \hat d\, u_{xxx}
=
\hat d_x \, u_{xx}
+
F_x-
2(\bar u'+u_x)u_{tx}
-2(\bar u+u)u_{txx}
%
\]
and
obtain
\begin{eqnarray*}
\frac{d}{dt}E_2(t)
&=&
\int_0^L
2\,h_1 \Big[
\left(\hat d_x \, u_{xx}
+
F_x-
2(\bar u'+u_x)u_{tx}
-2(\bar u+u)u_{txx}
\right)
\, u_{tx}
\\
&
-
&
(\bar u + u) \, u_{xx}^2\,  u_t\,
-
\hat d_x\,  u_{xx} \,u_{tx}
\Big]
dx
+
\left[ 2\,h_1\, \hat d\,  u_{xx}\, u_{tx}\right]_{x=0}^L
\\
&
-
&
\int_0^L 2\,  h_2 \,
\Big[u_{t} \, u_{xx}^2 +
\left(
\hat d\, u_{xxx}+\hat d_x \, u_{xx} + F_x - 2\, (\bar u'+u_x)u_{tx}
\right)\, u_{xx}
 +   u_{tx}\, u_{txx}  \Big]
\,dx
\\
&=&
\int_0^L
2\,h_1 \Big[
\left(
F_x-
2(\bar u'+u_x)u_{tx}
-2(\bar u+u)u_{txx}
\right)
\, u_{tx}
-
(\bar u + u) \, u_{xx}^2\,  u_t\,
\Big]
dx
\\
&
+
&
\left[ 2\,h_1\, \hat d\,  u_{xx}\, u_{tx}\right]_{x=0}^L
\\
&
-
&
\int_0^L 2\,  h_2 \,
\Big[u_{t} \, u_{xx}^2 +
\left(
\hat d\, u_{xxx}+\hat d_x \, u_{xx} + F_x - 2\, (\bar u'+u_x)u_{tx}
\right)\, u_{xx}
 +   u_{tx}\, u_{txx}  \Big]
\,dx
.
\end{eqnarray*}
Using integration by parts we obtain the identities
\[
\int_0^L -2\, h_2 \left( \hat d\, u_{xxx}+\hat d_x \, u_{xx} \right)\, u_{xx}
 -2\, h_2\,  u_{tx}\, u_{txx}
 \,
dx
\]
\[
=
\int_0^L - h_2 \,  \hat d\, (u_{xx}^2)_x
-2\, h_2 \, \hat d_x \, (u_{xx})^2
 - h_2\,  (u_{tx}^2)_x
 \,
dx
\]
\begin{eqnarray*}
& = &
\int_0^L
h_{2x} \hat d \, u_{xx}^2 + h_{2x} u_{tx}^2
+
2 \, h_2 \, (\bar u + u) (\bar u' + u_x) \, u_{xx}^2
 \, dx
 \\
 & + &
 \left[ - h_2 \, \hat d \, u_{xx}^2 - h_2 \, u_{tx}^2 \right]_{x=0}^L
\end{eqnarray*}
and
\[
\int_0^L -4h_1(\bar u+u)u_{tx}\, u_{txx} \, dx
=
 [-2 \, h_1 \, (\bar u+u) \, u_{tx}^2]_{x=0}^L +\int_0^L 2\, h_1
 \, (\bar u'+u_x) \, u_{tx}^2\, dx.
\]

Using these identities we obtain
$\frac{d}{dt} E_2(t) = \tilde I_1 +  \tilde I_2 +  \tilde I_3$
where $\tilde I_3$ contains all the terms coming from the boundary
and $\tilde I_1= \int_0^L h_{2x} \,\hat d\, u_{xx}^2 + h_{2x} \,u_{tx}^2 \, dx$
contains all the terms where $h_{2x}$ appears.
\end{proof}


%
%


\subsection{Proof of Theorem \ref{thm4.2}}

\begin{proof}
In the proof, we use Lemma \ref{neulemmaneueversion}.
Therefore we assume that $\bar u$ is sufficiently small in the
sense that (\ref{ubarvor2}) holds.
Moreover, we use Lemma \ref{normlemma}.
Therefore we assume that $\varepsilon_0(T)>0$ is sufficiently small
such that (\ref{uistklein}) holds.
%
%
%
%
%
%
%
We have
\begin{align}\label{5.11}
\frac{d}{dt}E(t)=
\frac{d}{dt}E_1(t)+\frac{d}{dt}E_2(t).
\end{align}
First we consider
$\frac{d}{dt}E_1(t) = I_1 + I_2 + I_3$.
Define $\mu_2 = \frac{1}{L}$.
By the definition of $h_2$
in
(\ref{h2defi})
we have
$(h_2)_x = - \mu_2 \, h_2$ and thus
\begin{equation}
\label{5.18}
I_1  = -\int_0^L \left( a^2-(\bar u+u)^2\right) \, \mu_2 \, h_2 \, u_x^2 + \mu_2 \, h_2 \, u_t^2 \, dx.
\end{equation}
%
For all $x\in [0,\, L]$ we have
$\mu_2 \, h_2(x) \geq \frac{1}{{\rm e} \, L}$
hence we have
%
%
\[I_1 \leq -  \frac{1}{2\,{\rm e} \, L\,  k }\, \int_0^L  2\,  h_1  \, (a^2-(\bar u+u)^2) \, u_x^2 + 2\,  h_1 \, u_t^2   \, dx.\]

Thus, by (\ref{5.38a}) we have
\begin{align}\label{5.51}
I_1\leq -  \frac{1}{2\,{\rm e} \, L\,  k }\, E_1(t).
\end{align}

Now we consider the term $I_2$ as defined in (\ref{5.16}).
Note that  due to
(\ref{2.27}), each of the terms that are added in $I_2$,
in particular $F\, u_t$ and $F\, u_x$,
contains a second order term of $u$, $u_t$, $u_x$ as a factor,
that is $u \,u_t$, $u \, u_x$, $u_x \, u_t$,  $u_x^2$  or $u_t^2$.

More precisely, the terms that appear as factors are either third order
terms $u_t u_x^2$, $u_x u_t^2$, $u_x^3$, $u u_x^2$, $u u_x u_t$, $u
u_t^2$ or terms of the form $\theta\bar u \, u u_t$, $\theta\bar
u \, u u_x$ , $\theta\bar u \, u_x u_t$, $\theta\bar u \,
u_x^2$ or $\theta\bar u \, u_t^2$. Since we have $h_1(x)= k$
and $\max_{x\in[0,L]} |h_2(x)|= 1$, the definition of
$I_2$ implies that there exists a continuous function $P_0$ with
$P_0(0)=0$ such that we have an estimate of the form
\[
I_2
\leq
 P_0\left( \max_{x\in [0,L]} \left\{|u(t,x)|, |u_x(t,x)|,  |u_t(t,x)|,\, |\bar u(x)|\right\} \right)
 \int_0^L \left(u^2+u_t^2+u_x^2\right) \,dx.
\]

In fact, the definitions of $I_2$, $F$ and $\tilde F$ imply that we can choose
\begin{eqnarray}\label{P_0}
P_0(t) & = &
 4\,  k \, t^2 + 2 k\Big(1+\frac{\theta}{2}\Big) t + 4 \, k\Big(1+\frac{\theta}{2}\Big) t^2 + 2 t + 4\Big(1+\frac{\theta}2\Big) t^2
 \nonumber\\
 & + & 2(k + 1)
\Big[ \theta^2 \frac{ 3 a^4 t^4 + 2 a^2 t^6 + t^8} {2(a^2 - t^2)^3}
+ \theta \frac{a^2 t}{ a^2 - t^2} + \theta \frac{t^4 + 3 a^2 t^2}{2
(a^2 - t^2)} \nonumber\\ & + &
 \theta^2  \frac{2 t^7 + 3 a^2 t^5 + 3 a^4 t^3}{4(a^2 - t^2)^3} t
+ 2 t^2 + \theta \frac{3 a^2 t + t^3}{a^2 - t^2} t \nonumber\\ & + &
2t + 2 t^2 + \frac{3}{2} \theta t^2 + \theta t \Big].
\end{eqnarray}

Using (\ref{poin}), and then (\ref{5.6}),  (\ref{5.7})  we obtain
the inequality
\begin{eqnarray}
\nonumber
I_2 & \leq &
 P_0\left( T_{Li}(t) \right)
 (1 + L^2)
 \int_0^L \left(u_t^2+u_x^2\right) \,dx
\\
\label{5.19}
& \leq &
 P_0\left( T_{Li}(t)
 \right)
(1 + L^2)
\left(\frac{1}{K_1} + \frac{1}{\tilde K_1} \right) E_1(t).
\end{eqnarray}

Now we focus on the boundary term $I_3$.
We use the notation $\bar u_0:=\bar u(0)$ and $\bar u_L:=\bar u(L)$.
Since $k>0$, by the boundary
conditions (\ref{2.29}), (\ref{2.30}) we have
\begin{eqnarray}\label{5.20}
\label{5.20imJahr2016}
I_3 & = & I_3^L -I_3^0
\end{eqnarray}
where
\begin{eqnarray}
\label{i3ldefinition}
I_3^L & = &
 -(a^2-\bar u_L^2)\, e^{-1} u_x^2(t,L)
 \\
\label{i30definition}
I_3^0 & = &  \Big( a^2 - \left(  (\bar u_0 + u(t,0) )+ \frac{1}{k} \right)^2 \Big) \, u_x^2(t,0).
\end{eqnarray}
Since (\ref{i3vor}) holds, we have
$|\bar u_0 + u(t,0)| \leq |\bar u_0|  + \gamma \, a - |\bar u_0|
 = \gamma a \leq a - \frac{1}{k}$.
%
%
Hence we have $I_3^0\geq 0$.
Since $I_3^L\leq 0$,  due to (\ref{5.20imJahr2016}) this implies
\begin{align}\label{5.22}
I_3 \leq 0.
\end{align}
Then inequalities (\ref{5.51}), (\ref{5.19}) and (\ref{5.22}) yield
\begin{eqnarray}\label{5.41}
\frac{d}{dt} \, E_1(t) &\leq &
 -  \frac{1}{2\,{\rm e} \, L\,  k } \, E_1(t)+
P_0\left( T_{Li}(t) \right)
 (1 + L^2)
\left(\frac{1}{K_1} + \frac{1}{\tilde K_1} \right) E_1(t) \nonumber\\
& = & - \left( \frac{1}{2\,{\rm e} \, L\,  k }   - P_0\left( T_{Li}(t) \right)
 (1 + L^2)
\left(\frac{1}{K_1} + \frac{1}{\tilde K_1} \right) \right) E_1(t).
\end{eqnarray}
With (\ref{kappadefinition}) this implies
that $E_1(t)$ is a strict Lyapunov function
and
(\ref{e1lyapunov}) holds.

Similarly, for $\tilde{I}_1$, we infer
\begin{eqnarray}
\label{5.27}
\tilde{I}_1 & = &
- \int_0^L (a^2-(\bar u+u)^2)
\, \mu_2 \, h_2 \, u_{xx}^2  +\mu_2 \, h_2  \, u_{tx}^2\, dx \nonumber
\\
& \leq &
 - \frac{1}{2\,{\rm e} \, L\,  k }\,
 \int_0^L (a^2-(\bar u+u)^2)
\, 2\, h_1 \, u_{xx}^2  + 2  \, h_1  \, u_{tx}^2\, dx. \nonumber
\end{eqnarray}
Hence (\ref{5.38agugat})
yields
\begin{eqnarray}
\tilde{I}_1
&\leq  &
-  \frac{1}{2\,{\rm e} \, L\,  k }\,  E_2(t).
\end{eqnarray}
Now we consider $\tilde I_2$ as defined in
(\ref{5.25}).
All the terms that are added in
$\tilde I_2$ are
contain  factors
$u_{xx}\,u_{tx}$, $u_{xx}^2$, $u_{tx}^2$,
$F_x \, u_{xx}$,  $F_x \, u_{tx}$.
Except for $F_x \, u_{xx}$,  $F_x \, u_{tx}$, it can easily be seen
that the coefficients that are multiplied
with these quadratic terms
become arbitrarily small if if  $T_{Li}(t)$
as defined in (\ref{T(t)}) is sufficiently small.

%
%
%
Now we have a closer look at $F_x$.
From
(\ref{2.27}),
we have
\begin{eqnarray*}
F_x&=&-2u_xu_{tx}-2u_tu_{xx}-2u_x^3-4uu_xu_{xx}-3\theta
uu_{x}^2-\frac{3}{2}\theta u^2u_{xx}-\theta u_xu_t-\theta uu_{tx}\\
&-&\theta^3\frac{9a^6\bar u^3-2\bar u^9-6a^4\bar u^5-11a^2\bar
u^7}{8(a^2-\bar u^2)^4}u^2-\theta^2\frac{\bar u^7+6a^4\bar
u^3-3a^2\bar u^5}{2(a^2-\bar u^2)^3}uu_x\\
&-&\theta\frac{\bar u^3}{a^2-\bar u^2}u_x^2-4\bar u u_x
u_{xx}-\theta\frac{3a^2\bar u-\bar u^3}{a^2-\bar u^2}(u_x^2+u
u_{xx})\\
&-&\theta^3\frac{6a^6\bar u^6+4a^2\bar u^{10}-\bar u^{12}-3a^4\bar
u^8}{2(a^2-\bar u^2)^5}u-\theta^2\frac{a^4\bar u^3+a^2\bar
u^5}{2(a^2-\bar u^2)^3}u_t-\theta^2\frac{3a^4\bar u^4}{(a^2-\bar
u^2)^3}u_x\\
&-&\theta\frac{a^2\bar u}{a^2-\bar u^2}u_{tx}-\theta\frac{3a^2\bar
u^2+\bar u^4}{2(a^2-\bar u^2)}u_{xx}.
\end{eqnarray*}
Also in $F_x \, u_{xx}$,  $F_x \, u_{tx}$
all the  terms that are added contain quadratic factors
$u_{xx}\,u_{tx}$, $u_{xx}^2$, $u_{tx}^2$,
$u_x\,u_{xx}$, $u_x\,u_{tx}$, $u_t\,u_{xx}$, $u_t\,u_{tx}$,
$u\,u_{xx}$, $u\,u_{tx}$
and the coefficients that are multiplied
with these factors
become arbitrarily small if if  $T_{Li}(t)$
as defined in (\ref{T(t)}) is sufficiently small.
%
%
%
%
Thus similar as in the estimate of $I_2$,
we can  find  a
continuous function $P_1(t)$ with $P_1(0)=0$
such that
using (\ref{poin}), and then (\ref{5.6}), (\ref{5.7})  we obtain the
inequality
\begin{eqnarray}
\label{5.28} \tilde I_2 & \leq &
 P_1\left( T_{Li}(t) \right)
 (1 + L^2)\int_0^L \left(u_t^2+u_x^2+u_{tx}^2+u_{xx}^2\right) \,dx
\nonumber\\ & \leq &
 P_1\left( T_{Li}(t)
 \right)
(1 + L^2)\left(\frac{1}{K_1} + \frac{1}{\tilde K_1} \right)
(E_1(t)+E_2(t)).
\end{eqnarray}
In fact if we replace in the representation of
$F_x$ in each of the terms that are added except for one factor
the expressions
$u$, $u_x$, $u_t$, $u_{xx}$, $u_{tx}$, $\bar u$
by $t$,
and treat
the other terms from the definition of $\tilde I_2$
in a similar way
since $h_1=k$ and $|h_2| \leq 1$
we can choose
\begin{eqnarray}\label{P_1}
P_1(t) & = &
  8\, t + 2 \, t  + 8 \, t^2 +  4 \, k \, t^2  + 4 \, k \, t
 \nonumber\\
 & + & 2(k + 1)
\Big[\theta^3\frac{9a^6t^2+2t^9+6a^4t^5+11a^2t^7}{8(a^2-t^2)^4}t+\theta^2\frac{t^7+6a^4t^3+3
a^7}{2(a^2- t^2)^3}t\nonumber\\
& +&
\theta\frac{t^3}{a^2-t^2}t+4t^2+\theta\frac{3a^2t+t^3}{a^2-t^2}2t+\theta^3\frac{6a^6t^6+4a^2t^{10}+t^{12}+3a^4t^8}{2(a^2-t^2)^5}\nonumber\\
& + &
\theta^2\frac{a^4t^3+a^2t^5}{2(a^2-t^2)^3}+\theta^2\frac{3a^4t^4}{(a^2-t^2)^3}+\theta\frac{a^2t}{a^2-t^2}+\theta\frac{3a^2t^2+t^4}{2(a^2-t^2)}
\nonumber\\ & + & 2t + 2 t + 2 t^2+4 t^2+ 3\theta t^2+\frac{3}{2}
\theta t^2 + \theta t +\theta t\Big].
\end{eqnarray}

For the boundary term $\tilde{I}_3$, we
use (\ref{2.26}) in the form
\[
(a^2 - (\bar u + u)^2) \, u_{xx} =  u_{tt} + 2 (\bar u + u) u_{tx} -  F.
\]
In particular, for $x=L$
due to (\ref{2.30})  we have $u(t,L) = u_t(t,L)= u_{tt}(t,L) = 0$,
hence  at $x=L$ we get
\[(a^2 - \bar u_L^2) \, u_{xx}(t,L) =  0 +  2  \,\bar u_L \, u_{tx}(t,L) -  F(t,L).\]
Using (\ref{2.27}) and the definition of $\tilde F$ we  obtain
 \[F(t,L) =
- \bar u_L \,\left( \frac{3}{2} \, \theta \, \bar u_L
 + 4 \, \bar u_x(L) \right) u_x(t,L) - 2 \, \bar u_L \,  u_x^2(t,L).
 \]
Due to (\ref{i3vor}) this yields
\begin{equation}
\label{ftlschranke}
|F(t,L)| \leq
\bar u_L \, \left(\frac{3}{2} \, \theta \, \bar u_L  + 4 |\bar u_x(L)| + 2\right) |u_x(t,L)|.
\end{equation}

We have
\[\tilde{I}_3= \tilde{I}_3^L - \tilde{I}_3^0\]
where $\tilde{I}_3^0$
is given in (\ref{tildei30definition}) and
\begin{eqnarray*}
 \tilde{I}_3^L & = &
 \left[
 2 \,k \,  u_{tx}(t,\, L)  -  \frac{1}{\rm e}\, u_{xx}(t,\, L)
 \right]
 \left[ 2  \,\bar u_L \, u_{tx}(t,\, L) -  F(t,\, L) \right]
 - \left( 2 \,  k \, \bar u_L + \frac{1}{\rm e} \right) u_{tx}^2(t,\, L)
   \\
   & = &
   \left[
 2 \,k \,  u_{tx}(t,\, L)  -  \frac{1}{\rm e}\,
 \frac{ 2  \,\bar u_L \, u_{tx}(t,\, L) -  F(t,\, L) }{a^2 - \bar u_L^2}
 \right]
 \left[ 2  \,\bar u_L \, u_{tx}(t,\, L) -  F(t,\, L) \right]
 \\
 &
 -
 &
  \left( 2 \,  k \, \bar u_L + \frac{1}{\rm e} \right) u_{tx}^2(t,\, L)
 \\
    & = &
   \left[    2\, k \,\bar u_L  -  \frac{1}{\rm e}\, \frac{a^2 + 3 \bar u_L^2 }{a^2 - \bar u_L^2}    \right]  u_{tx}^2(t,\, L)
   \\ & + &
 \left[  \frac{1}{\rm e}\, \frac{4 \bar u_L }{a^2 - \bar u_L^2} - 2 \, k \right] u_{tx}(t,\, L)\,  F(t,\, L)
 - \frac{1}{\rm e}\, \frac{ 1 }{a^2 - \bar u_L^2} \,  F(t,\, L)^2.
 \end{eqnarray*}
 With $C_g(\bar u_L)$ as defined in (\ref{cgdefinition}) we have
\begin{equation}
\label{cgdefinitionneu}
C_g(\bar u_L) =
 \frac{ a^2 - \bar u_L^2}{{\rm e}\, \bar u_L^2 }
 \,\frac{1}{\left(
 \frac{3}{2} \, \theta \, \bar u_L + 4 \, |\bar u_x(L)| + 2 \right)^2}
=
 \frac{ a^2 - \bar u_L^2}{{\rm e}\, \bar u_L^2\left( 2
+ \frac{3}{2} \, \theta \, \bar u_L +  \frac{ 2\,\theta\,\bar u_L^3 }{  a^2 - \bar u_L^2 } \right)^2}
>0.
\end{equation}

 Then due to
(\ref{ftlschranke}) we have
$C_g(\bar u_L) \, F(t,L)^2 \leq  (a^2 - \bar u_L^2) \,{\rm e}^{-1} \,  u_x^2(t,L)$.
Hence
(\ref{i3ldefinition}) implies
\begin{equation}
\label{i3cgf2ungleichung}
I_3^L + C_g(\bar u_L) \, F(t,L)^2 \leq
\left[ - \frac{1}{\rm e} (a^2 - \bar u_L^2) +
\frac{1}{\rm e} (a^2 - \bar u_L^2)
\right] \, u_x^2(t,L)
=
 0.
 \end{equation}

Consider the matrix $\tilde A_3(\bar u_L)$
 as defined in (\ref{tildea3definition}).
 With the notation
$F= F(t,L)$ and $u_{tx}=u_{tx}(t,\, L)$
 we have
%
 \[
  \tilde{I}_3^L -C_g \, F^2
  =
  -( u_{tx}, \, F) \, \tilde A_3(\bar u_L) \,\left(
 \begin{array}{c}
 u_{tx}
 \\
 F
 \end{array}
 \right).
 \]

  Due to (\ref{uistklein}), Lemma \ref{neulemmaneueversion} implies that
  the matrix $\tilde A_3(\bar u_L)$ is positive definite.
 Thus $\tilde{I}_3^L - C_g \, F^2 \leq 0$. Due to (\ref{i3cgf2ungleichung})
 this yields
 \begin{equation}
 \label{tildei3lungleichung}
 I_3^L + \tilde{I}_3^L  = I_3^L + C_g \, F^2(t,L)  + \tilde{I}_3^L - C_g \, F^2(t,L) \leq 0.
 \end{equation}


Now we look at $\tilde{I}_3^0$ that depends on the values at $x=0$
where the Neumann condition (\ref{2.29}) is prescribed
and we have
$u_t(t,0) = \frac{1}{k}\, u_{x}(t,0)$,
$ u_{tt}(t,0) = \frac{1}{k}\, u_{xt}(t,0)$.
Hence
with the notation
$F(t,\,0)=F(u(t,\, 0)\, , u_x(t,\, 0),\, u_t(t,\, 0)\,)$ and $\bar u(t,\, 0)= \bar u_0$
for $x=0$,  (\ref{2.26}) yields
\begin{equation}
\label{hilfsgleichung15072016}
u_{xx}(t,\,0)=
\frac{ 1 +  2\, k \,  (\bar u_0 + u(t,\,0)) }{k\,(a^2 - (\bar u_0 + u(t,\,0))^2)}
\, u_{tx}(t,\,0)
-
\frac{ 1  }{(a^2 - (\bar u_0 + u(t,\,0))^2)} \, F(t,\,0).
\end{equation}
Due to (\ref{2.27}) we have for $x=0$
\begin{eqnarray*}
F(t,0) & = &
-2 \left[ \frac{1}{k} + \bar u_0 + u \right]\, u_x^2
\\
& - &
\left[\theta \, \frac{\bar u_0^4+3a^2\bar u_0^2 + \frac{2}{k} a^2 \bar u_0 }{2(a^2-\bar u_0^2)}
+
\frac{\theta}{k}\, u + \frac{3}{2}\, \theta \, u^2 +
\theta \, \frac{3a^2\bar u_0-\bar u_0^3}{a^2-\bar u_0^2}\, u
\right]\,u_x
\\
& -  &
 \theta^2 \frac{3a^4\bar u_0^4-2a^2\bar u_0^6+\bar u_0^8}{2(a^2-\bar u_0^2)^3} \,u
  -
 \theta^2 \, \frac{2\bar u_0^7-3a^2\bar u_0^5+3a^4\bar u_0^3}{4(a^2-\bar u_0^2)^3}\,u^2.
\end{eqnarray*}
Due to (\ref{i3vor}) this yields
\begin{eqnarray*}
\label{ft0schranke}
|F(t,0)|
& \leq &
2 \left[ \frac{2}{k^2} +  \frac{\bar u_0}{k} \right]\, \left|u_x(t,\, 0)\right|
\\
& + &
\left[\theta \, \frac{\bar u_0^4+3a^2\bar u_0^2 + \frac{2}{k} a^2 \bar u_0 }{2(a^2-\bar u_0^2)}
+
 \frac{5}{2}\,\frac{\theta}{k^2}   +
\frac{\theta}{k} \, \frac{3a^2\bar u_0-\bar u_0^3}{a^2-\bar u_0^2}
\right]\,\left| u_x(t\,,0)\right|
\\
& + &
 \frac{\theta^2 }{k}
 \left[\frac{6 k a^4\bar u_0^4- 4 k a^2\bar u_0^6+ 2 k \bar u_0^8
  +
  2\bar u_0^7-3a^2\bar u_0^5+3a^4\bar u_0^3}{4(a^2-\bar u_0^2)^3}\right]\, \left| u(t,\, 0)\right|.
\end{eqnarray*}
With $K_{\partial}(k,  \,\bar u_0 )$ as defined in
(\ref{upsilondefinition})
 due to Young's inequality we have
\begin{equation}
\label{ft0inequality}
F(t,0)^2 \leq K_{\partial}(k,  \,\bar u_0 ) \, u_x^2(t,\,0) + C_{E1}(\bar u_0)   \, u(t,\,0)^2
\end{equation}
with the constant
\begin{equation}
\label{ce1definition}
C_{E1}(\bar u_0)
=
2 \,
\frac{\theta^4 }{k^2} \,
 \left[\frac{6 k a^4\bar u_0^4- 4 k a^2\bar u_0^6+ 2 k \bar u_0^8
  +
  2\bar u_0^7-3a^2\bar u_0^5+3a^4\bar u_0^3}{4(a^2-\bar u_0^2)^3}\right]^2.
 \end{equation}

%
With the notation $h_{\tilde B_3}(t) = \bar u_0 + u(t,0)$
using (\ref{hilfsgleichung15072016}) we obtain
\begin{eqnarray}
\nonumber
\tilde{I}_3^0 & = &
-\frac{1}{a^2- h_{\tilde B_3}(t)^2} \, F^2(t,0)
+
2 \,
\left(\frac{1+2k\, h_{\tilde B_3}(t)}{k\, (a^2-h_{\tilde B_3}(t)^2)}- k\right)\,F(t,0)\, u_{tx}(t,0)\\
\label{tildei30definition}
& + &
\left(
1+ 2\, k\, h_{\tilde B_3}(t) -
\frac{(1+2\, k \, h_{\tilde B_3}(t))^2}{k^2\, (a^2- h_{\tilde B_3}(t) ^2)}
\right) \, u_{tx}^2(t,0).
 \end{eqnarray}

%
%
%
For $ \upsilon  := 2 \, k^2> k^2$ we have
\[
  \tilde{I}_3^0  + \upsilon \, F(t,0)^2
  =
  ( u_{tx}(t,\,0), \, F(t,\,0)) \, \tilde B_3(h_{\tilde B_3}(t))  \,\left(
 \begin{array}{c}
 u_{tx}(t,\,0)
 \\
 F(t,\,0)
 \end{array}
 \right)
 \]
with the matrix $\tilde B_3(h_{\tilde B_3}(t))$ from (\ref{tildeb3definition}).
%
%
Since we have assumed that  $|\bar u_0|< \varepsilon_1(2\, k^2)$ and
$|u(t, \,0) |<   \varepsilon_1(2\,k^2)$,
by the definition of $h_{\tilde B_3}(t)$ this implies
$|h_{\tilde B_3}(t)| < 2\,\varepsilon_1(2\, k^2)$
hence Lemma \ref{neulemmaneueversion}
implies that the matrix $\tilde B_3 ( h_{\tilde B_3}(t))$ is positive definite.
Thus we have $\tilde{I}_3^0  + \upsilon \, F(t,0)^2\geq 0$.
Hence
due to (\ref{i30definition}),
(\ref{i3vor}),
(\ref{ft0inequality}) and
 (\ref{kpartialassumption}) we have
 \begin{eqnarray*}
 {I}_3^0 + \tilde{I}_3^0
& = & {I}_3^0 - 2 \, k^2 \, F(t,0)+ \tilde{I}_3^0  + 2\, k^2 \, F(t,0)
\\
& \geq & {I}_3^0 - 2 \, k^2 \, F(t,0)
\\
& \geq &
 \Big( a^2 - \left(  \bar u_0 + \frac{2}{k} \right)^2 
- 2\,  k^2 \,K_{\partial}(k,  \,\bar u_0 ) \Big) \, u_x^2(t,\,0) - 2 \, k^2 \, C_{E1}(\bar u_0)   \, u(t,\,0)^2
\\
& \geq & - 2 \, k^2 \, C_{E1}(\bar u_0)   \, u(t,\,0)^2.
\end{eqnarray*}
Due to the Dirichlet boundary condition (\ref{2.30}) we have
\[u(t,\,0)^2 = \left|u(t,\,0) - u(t,\, L) \right|^2
=
 \left|\int_0^L u_x(t,\, s)\, ds\right|^2 \leq L \, \int_0^L u_x^2 (s) \,ds
\leq \frac{  L}{K_1} E_1(t)
\]
 where the last inequality follows from (\ref{5.6}).
This yields
\begin{equation}
\label{i30tildeschranke}
 {I}_3^0 + \tilde{I}_3^0  \geq -  2 \, k^2 \, C_{E1}(\bar u_0) \, \frac{  L}{K_1} E_1(t)
 .
 \end{equation}
We have
\[
 \frac{d}{dt} E(t)
=
I_1 + I_2 + {\tilde I}_1 + {\tilde I}_2 +
(I_3^L  + {\tilde I}_3^L)
-
(I_3^0 +  {\tilde I}_3^0).
 \]
Then inequalities
(\ref{5.51}),  (\ref{5.19}),
(\ref{5.27}),
(\ref{5.28}), (\ref{tildei3lungleichung}) and (\ref{i30tildeschranke}) yield
\begin{eqnarray}
\nonumber
\frac{d}{dt} E(t)
\nonumber
& \leq &
- \left[ \frac{1}{2\,{\rm e} \, L\,  k }   - P_0\left( T_{Li}(t) \right)
 (1 + L^2)
\left(\frac{1}{K_1} + \frac{1}{\tilde K_1} \right) \right] \,  E_1(t)
\\
\label{5.46}
& - &   \left[  \frac{1}{2\,{\rm e} \, L\,  k }
 - P_1\left( T_{Li}(t) \right)
 (1 + L^2)
\left(\frac{1}{K_1} + \frac{1}{\tilde K_1} \right) \right] \, E_2(t)
\\
\nonumber
& + &
\left[P_1\left( T_{Li}(t) \right)
 (1 + L^2)\left( \frac{1}{K_1} + \frac{1}{\tilde K_1} \right)
   + 2 \, k^2 \, C_{E1}(\bar u_0) \, \frac{  L}{K_1}  \right] \, E_1(t).
\end{eqnarray}


Note that due to (\ref{apriori}),
$T_{Li}(t)$
becomes arbitrarily small if the norm of the initial data and of $\bar u$ is sufficiently small.
%
%
%
%
%
Define the number
$\kappa>0$ as in (\ref{kappadefinition}) and
$\mu$ as in (\ref{mudefinition}).
%
%
%
%
%
%
%
%
%
%
%
%
%
%
%
%
%
Then
(\ref{5.46})
 and  the definition of $\mu$
 yield
\begin{align}\label{5.32}
\frac{d}{dt}E(t)\leq -{\mu} \,E(t)
\;\mbox{\rm for all}\; t\in[0,T],
\end{align}
which  implies inequality (\ref{4.9}).
\end{proof}


\subsection{Proof of Corollary \ref{cor4.3}}

Now we present the proof of Corollary  \ref{cor4.3}.
\begin{proof}
For all $t\in [0,\,T]$
the inequalities (\ref{unter}) and (\ref{poin}) for the
$H^2$-Lyapunov function $E(t)$ defined in (\ref{4.1})
imply the following inequalities
\begin{align}\label{5.33}
&\|u(t,\cdot)\|_{H^2(0,L)}\leq  \sqrt{ \frac{1 + 2 L^2}{K_{\min} } }   \sqrt{E(t)},\\
\label{5.40}
& \|u_t(t,\cdot)\|_{H^1(0,L)}\leq  \sqrt{ \frac{1}{K_{\min} } }  \sqrt{E(t)}.
\end{align}
Inequality (\ref{ober}) implies for $t=0$:
\begin{align}\label{5.34}
\sqrt{ K_{\max}   } \; \|(u(0,\cdot),u_t(0,\cdot))\|_{H^2(0,L)\times
H^1(0,L)}\geq\sqrt{E(0)}.
\end{align}
With the  positive constants
\begin{align}
\tau_1:=\frac{K_{\min}}{1+2L^2},\quad
\tau_2:=K_{\max}\quad,  
\label{5.35}
\eta_1 := 2\, \sqrt{\tau_2/\tau_1},
\end{align}
the inequalities (\ref{5.33}), (\ref{5.40}),  (\ref{4.9}), (\ref{5.34}) imply the
estimate (\ref{4.10}).

The inequality (\ref{4.11}) follows from (\ref{4.10}) and the
Sobolev embedding $H^2((0, L))\hookrightarrow C^1([0, L])$ and
$H^1((0, L))\hookrightarrow C^0([0, L])$, see
(\cite{ABM},\cite{TW}).

Theorem \ref{thm4.2} implies that
for all $T>0$ the decay rate
$\mu = \frac{1}{4 \,{\rm e} L k}$
can be achieved for sufficiently small initial data.
With this decay rate, for
\begin{equation}
\label{Tschranke0305}
T\geq \max\limits_{i\in \{1,\, 2\}} 8 \,{\rm e} \,L k {\,{\rm ln}( 2\, \eta_i)},
\end{equation}
and $i\in \{1,\, 2\}$ we have the inequality
$\eta_i \, \exp\left(-\frac{\mu}2 \,T\right) \leq \frac{1}{2}$.
Hence
the inequalities (\ref{4.10}) and   (\ref{4.11})
imply
the inequalities (\ref{4.1002.05a}) and
(\ref{4.1002.05b}).
%
\end{proof}
%
%
%
%

%

\section{Global solutions}
\label{sec6}
In this section we show that
the exponential  decay of the  Lyapunov function
defined in  (\ref{4.1}) implies that
the solution exists global in time without
losing regularity, that is it keeps the regularity
of the initial state.

Let us first observe that (\ref{2.5}) implies that the eigenvalues
$\lambda_-= (-a+ \bar u + u)$ and $\lambda_+=(a+ \bar u + u)$ do not depend on the derivatives of $u$.
Therefore for a given value of $T>0$,
$(s,x,t)\in [0,T]\times [0,L]\times [0,T]$,
 the field of characteristic curves $\xi^u_{\pm}(s,x,t)$
 corresponding to $u\in C^1([0,T]\times [0,L])$
defined by the integral equation
\begin{equation}
\xi^u_{\pm}(s,x,t)=x             \pm a(s-t) +\int_t^s \bar u(\xi^u_{\pm}(\tau,x,t))
+u (\tau,\,\xi^u_{\pm}(\tau,x,t))\,d\tau
\in [0,L]
\end{equation}
is well-defined
for a $C^1$-function $u$
if $\bar u$ and $u$ have sufficiently small $C^1$-norm.

%
%
%


%
%
In order to obtain a semi-global $C^1$-solution of
 (\ref{2.26}), (\ref{2.29}), (\ref{2.30})
 in the sense of integral equations along these characteristic curves,
 the boundary condition
 (\ref{2.29}) at $x=0$ is written in
the form of the integral equation
\begin{equation}
\label{rbx=0h}
u(t,0)= u(0,0) + \frac{1}{k} \int_0^t u_x(s,0)\, ds.
\end{equation}
To be precise we define  $(r_+,r_-)=(R_+-\bar R_+, R_- - \bar R_-)$.
Then  we have $u = - \frac{1}{2}(r_+ + r_-)$.
Thus the boundary condition
(\ref{2.30}) at $x=L$ is equivalent to
\begin{equation}
\label{rand1}
r_-(t,L) = - r_+(t,L)
\end{equation}
 and
(\ref{rbx=0h}) is equivalent to
\begin{equation}
\label{rand2}
r_+(t,0) = -r_-(t,0) +
(r_+(0,0) + r_-(0,0) ) +  \frac{1}{k} \int_0^t (r_+)_x(s,0) + (r_-)_x(s,0)\, ds.
\end{equation}
Due to (\ref{2.6}), $(r_+,r_-)$ satisfies the system in diagonal form
\begin{equation}
\label{117}
\partial_t\left(\begin{array}{ll}
r_+\\r_-\end{array}\right)+\hat{D}(\bar R_+ + r_+, \bar R_- + r_-)\
\partial_x\left(\begin{array}{ll}
r_+\\r_-\end{array}\right)
\end{equation}
\begin{align}
=\hat{S}(\bar R_+ + r_+, \bar R_- + r_-)
- \hat{S}(\bar R_+, \bar R_-)
\end{align}
\begin{equation}
\label{119}
+
\left[\hat{D}(\bar R_+ , \bar R_-) -
\hat{D}(\bar R_+ + r_+, \bar R_- + r_-)
\right]
\partial_x
\left(\begin{array}{ll}
\bar R_+\\ \bar R_-
\end{array}\right).
\end{equation}
Let $t^u_{\pm}(x,t)\leq t$ denote the time
where $\xi^u_{\pm}(s,x,t)$
hits the boundary of $[0,T]\times [0,L]$.
Then
(\ref{shutdefinition}) implies that
 $(r_+,r_-)$ satisfy the integral equations
\begin{eqnarray*}
r_\pm(t,x)
& = &
r_\pm(t^u_\pm(x,t),\; \xi^u_\pm(t^u_\pm(x,t),\;x,t))
\\
& + &
\int_{t^u_\pm(x,t) }^{t} p_\pm(r_+ + r_-) ( s,\, \xi^u_{\pm}(s,\;x,t))\, ds
\end{eqnarray*}
with
\begin{equation}
p_\pm(z)
=
\frac{ \theta}{2}
[\frac{1}{4} z^2  - \bar u \, z ]
+ \frac{1}{2} [ \partial_x\bar R_\pm ]
z .
\end{equation}

Now we consider the initial boundary value problem
with initial data for $(r_+,r_-)$ at $t=0$,
the equation in diagonal form
(\ref{117})-(\ref{119})
and the boundary conditions
(\ref{rand1}), (\ref{rand2}).

Let a time $T>0$ be given such
that (\ref{4.1002.05a})  holds.
With initial data for $(r_+,r_-)$ in $[C^1([0,L])]^2$ at
$t=0$ that are sufficiently small
(with respect to the $C^1$-norm),
compatible to $u$
and satisfy the
 $C^1$--compatibility conditions for (\ref{rand1}), (\ref{rand2}),
 as in \cite{LY}
we obtain a semi-global classical solution $(r_+,r_-)\in C^1([0,T]\times [0,L])$.
Thus we also get  a
continuously differentiable function  $u = - \frac{1}{2}(r_+ + r_-)$
for $(t,x)\in [0,\, T]\times [0,\, L]$.


If  our initial data for $(r_+,r_-)$ at $t=0$ are more regular,
namely in  $[ H^2(0,L) ]^2$
and satisfy the assumptions that we just mentioned,
they  generate a solution that is more regular
than the classical solution in general:
For all $t\in [0,T]$ the second partial derivatives
$(\partial_{xx} r_+,\, \partial_{xx} r_-)$
are in $L^2(0,L)$.
This can be seen as follows.

For initial
data with sufficiently small $H^2$-norm
also the $C^1$-norm is small.
Thus we  know that a classical semi-global solution
exists on $[0,T]$ and
we can fix the corresponding characteristic curves.
As a consequence, we obtain a semilinear evolution  for
$(\partial_{xx} r_+,\, \partial_{xx} r_-)$
with fixed characteristic curves.
The evolution of
$\partial_{xx} r_{\pm}$
is governed by the integral equation
\begin{eqnarray*}
\label{rxx1}
\partial_{xx}  r_\pm(t,x)
& = &
\partial_{xx} r_\pm(t^u_\pm(x,t),\; \xi^u_\pm(t^u_\pm(x,t),\;x,t))
\\
\label{rxx2}
& + &
\int_{t^u_\pm(x,t) }^{t} P_\pm(r_+, r_-, \partial_x r_+, \partial_x r_-, \partial_{xx} r_+, \partial_{xx} r_-) (s,\, \xi^u_{\pm}(s,\;x,t))\, ds
\end{eqnarray*}
with a polynomial $P_\pm$
(with $C^1$-coefficients, similar to $p_\pm$)
that is affine linear with respect to $\partial_{xx} r_{\pm}$.
We can consider
$t^u_\pm(x,t)$,  $\xi^u_\pm(s,x,t)$,
 $r_+, r_-, \partial_x r_+, \partial_x r_-$
 as given continuous functions. Then
$\partial_{xx}  r_\pm(t,\cdot)\in L^2(0,L)$
 is given as the solution of a family of linear integral equations.
 Thus we can show that
 $\partial_{xx}  r_\pm(t,\cdot)\in L^2(0,L)$ for all $t\in [0,T]$.
This implies that the lifespan  of the
$H^2$-solution
only depends on the $C^1$-norm of $(r_+,r_-)$,
so in particular it is well-defined on the time interval $[0,T]$.

Thus we can construct a global $H^2$-solution as follows:

Since we have  chosen $T>0$ such
that
(\ref{4.1002.05a})
 holds,
for nonzero initial data
$(\varphi,\, \psi)$ with sufficiently small $H^2\times H^1$-norm,
we obtain an $H^2$-solution on $[0,T]$ as described above
%
and due to
(\ref{4.1002.05a})
the $H^2\times H^1$ norm
of  $(u(T,\cdot), \, u_t(T,\,\cdot))$
is less than
the
$H^2\times H^1$ norm
of  $(u(0,\cdot), \, u_t(0,\,\cdot))$.
Thus we can start our construction again
 with initial data
 $(u(T,\cdot), \, u_t(T,\,\cdot))$
 where the $H^2\times H^1$ norm has been decreased
 at least  by a factor $\frac{1}{2}$
  to obtain an $H^2$-solution
on the time interval $[T,\, 2\,T]$.
By repeating the procedure iteratively,
for initial data
$(\varphi,\,\psi)$
 with sufficiently small
$H^2\times H^1$-norm,
we thus obtain a solution that is
well defined for all $t>0$.
Moreover, Lemma 2 in  \cite{gutus} implies that
the
$H^2\times H^1$-norm of the solution decays exponentially with time.
In addition,
Theorem \ref{thm4.2} implies that
also the Lyapunov function $E(t)$ decays exponentially with time.
The above considerations yield the following result.
\begin{thm}
\label{thm4.2h}
{\bf (Global Exponential Decay of the $H^2$-Lyapunov Function).}
\label{thm1h}
\noindent
Let a stationary subsonic state $\bar u(x)\in C^2(0,L)$ be given
that satisfies (\ref{statvor}). 
Let $\gamma \in (0, 1/2]$ be given.
Assume that for all $x\in L$ we have
   $\bar u(x) \in\left(0, \,  \gamma \, a\right)$.
Choose a real number $k> \frac{1}{ (1 - \gamma) \, a  }$.
Assume that $\bar u$ is sufficiently small and
$k$ sufficiently large such
that
for $K_{\partial}(k,  \,\bar u(0) )$  as defined in (\ref{upsilondefinition})
condition (\ref{kpartialassumption}) holds.
Assume that
%
$\|\bar u\|_{C^2([0,L])}$ is sufficiently small
such that
$\|\bar u\|_{C([0,L])} < \varepsilon_1(2\, k^2)$
 and
(\ref{ubarvor2}) holds.
%
Choose $\varepsilon_2$ as in Lemma \ref{normlemma}.
Define $T$ as in (\ref{Tschranke0305}).

Choose a real number  $\tilde \varepsilon_0 \in (0,\,\varepsilon_0(T))$
%
%
sufficiently small such that for all
initial data that satisfies
\begin{align}
\label{4.835}
\|(\varphi(x),\psi(x))\|_{H^2([0,L])\times H^1([0,L])}\leq \tilde \varepsilon_0
\end{align}
and the
$C^1$-compatibility conditions at the points
$(t,x) = (0,0)$ and $(t, x)=(0,L)$
 the
 solution of
 problem (\ref{2.26}),(\ref{2.28}),(\ref{2.29}),(\ref{2.30})
  exists on the time-interval $[0,\, T]$.
Moreover, assume that
$\tilde \varepsilon_0 > 0$ and $\bar u$ are  sufficiently
 small such that
(\ref{uistklein}),
(\ref{i3vor})  and (\ref{kappadefinition}) hold.
%


%


If  the initial data satisfies  (\ref{4.835}),
%
the mixed initial-boundary value problem
(\ref{revisionsystem})
((\ref{2.26}), (\ref{2.28}), (\ref{2.29}), (\ref{2.30}) respectively)
has  a unique
solution
$u \in L^\infty((0,\infty), H^2[0,L])$
with
$u_t \in L^\infty((0,\infty), H^1[0,L])$.
For the number
$\mu \in  [\frac{1}{4 \,{\rm e} L k}, \, \frac{1}{2\,{\rm e} \, L\,  k })$
defined in (\ref{mudefinition})
we have the inequality
\begin{align}\label{4.9h}
E(t)\leq E(0) \, \exp\left(-{\mu}\,t\right) \; \mbox{\rm for all }\; t>0
\end{align}
with the strict Lyapunov function $E(t)$ as defined in (\ref{4.1}).
Moreover, we have
\[E_1(t)\leq E_1(0) \, \exp\left(-{\mu}\,t\right) \; \mbox{\rm for all }\; t>0.\]
\end{thm}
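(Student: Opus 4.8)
The plan is to combine the finite-horizon decay of Theorem \ref{thm4.2} with a bootstrap argument over successive time intervals of length $T$, exploiting the crucial fact that all the structural constants ($\varepsilon_1$, $\varepsilon_2$, $K_1$, $\tilde K_1$, $K_{\min}$, $K_{\max}$ and the rate $\mu$) are independent of $T$.

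First I would settle the $H^2$-regularity on a single finite interval. As observed before the statement, the eigenvalues $\lambda_\pm = \pm a + \bar u + u$ do not depend on the derivatives of $u$, so the characteristic field $\xi^u_\pm$ is well-defined for any $C^1$-function $u$ with sufficiently small $C^1$-norm. Rewriting the problem in the perturbed Riemann invariants $(r_+,r_-)$ with the integral boundary relations (\ref{rand1}), (\ref{rand2}) along these characteristics, the semi-global theory of \cite{LY} yields a unique $C^1$-solution, and the fixed-characteristic integral equation for $(\partial_{xx} r_+,\partial_{xx} r_-)$ shows that the second spatial derivatives stay in $L^2(0,L)$. Since the lifespan of this $H^2$-solution is controlled solely by the $C^1$-norm of $(r_+,r_-)$, once that norm remains small the solution persists on $[0,T]$ with full $H^2 \times H^1$ regularity.

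Second, on $[0,T]$ with $T$ as in (\ref{Tschranke0305}), I would invoke Theorem \ref{thm4.2} to obtain $E(t) \leq E(0)\exp(-\mu t)$ with $\mu \geq \frac{1}{4\,{\rm e}\,L\,k}$, together with the contraction estimate (\ref{4.1002.05a}) of Corollary \ref{cor4.3}, which guarantees that the $H^2 \times H^1$-norm of $(u(T,\cdot),u_t(T,\cdot))$ is at most half that of the initial data. Because this halved norm is still below $\tilde\varepsilon_0$, the state at $t=T$ again satisfies the smallness hypotheses (\ref{4.835}), (\ref{uistklein}), (\ref{i3vor}) and (\ref{kappadefinition}), while the $C^1$-compatibility conditions at $(T,0)$ and $(T,L)$ hold automatically, since the solution already satisfies the boundary conditions (\ref{2.29}), (\ref{2.30}) and the equation (\ref{2.26}). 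I can therefore restart the construction on $[T,2T]$ with the same constant $\mu$, which gives $E(t) \leq E(T)\exp(-\mu(t-T)) \leq E(0)\exp(-\mu t)$ on that interval.

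Third, iterating this step on each $[nT,(n+1)T]$ produces a solution defined for all $t>0$ whose $H^2 \times H^1$-norm is geometrically contracted across successive intervals; concatenating the interval-wise bounds yields (\ref{4.9h}), namely $E(t) \leq E(0)\exp(-\mu t)$ for all $t>0$, and likewise for $E_1$. The exponential decay of the $H^2 \times H^1$-norm itself then follows from Lemma 2 in \cite{gutus}, and membership in $L^\infty((0,\infty),H^2[0,L])$ with $u_t \in L^\infty((0,\infty),H^1[0,L])$, together with global uniqueness, is immediate from the uniform bounds and the uniqueness on each finite block. The main obstacle is verifying that the iteration closes with a \emph{uniform} decay rate: this hinges entirely on the $T$-independence of the smallness thresholds and of $\mu$, so the decisive point is to track carefully that the contraction (\ref{4.1002.05a}) keeps the restarted data inside the fixed admissible ball rather than letting the tolerances deteriorate from one interval to the next.
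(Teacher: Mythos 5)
Your proposal is correct and follows essentially the same route as the paper: the $H^2$-regularity on finite intervals via the Riemann invariants $(r_+,r_-)$ and the linear integral equations for $\partial_{xx}r_\pm$ along fixed characteristics, the bootstrap over blocks of length $T$ driven by the halving estimate (\ref{4.1002.05a}) with all thresholds and the rate $\mu$ independent of $T$, and the appeal to Lemma 2 of \cite{gutus} for the norm decay. Your explicit remarks that the $C^1$-compatibility conditions hold automatically at the restart times and that the interval-wise bounds concatenate to give (\ref{4.9h}) are details the paper leaves implicit, but they do not change the argument.
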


\section{Summary and outlook}
In this paper we have considered a
quasilinear
wave
equation
for the velocity of a gas flow that is governed by
the isothermal Euler
equations with friction.
We have presented a method of boundary
feedback stabilization to stabilize the velocity locally around a
given stationary state.
For the proof, we have  introduced a strict $H^2$-Lyapunov
function (see (\ref{4.1})).


We have shown that, for initial
conditions with sufficiently small $C^2\times C^1$-norm and for
appropriate boundary feedback conditions, the $H^2\times H^1$-norm of the
solution decays exponentially with time.
In addition, we have
shown
that with our velocity  feedback law,
for initial data with
sufficiently small $H^2\times H^1$-norm
the solution exists globally in time and the $H^2\times H^1$-norm of the
solution decays exponentially.

In this paper, the strict $H^2$-Lyapunov function is used to prove
the stability of the solution. It would also be interesting to
consider other types of Lyapunov functions, such as weak Lyapunov
functions. Moreover, when a disturbance is considered,
Input-to-State Stability
Lyapunov functions should be studied
(see \cite{LGM},\cite{MP}).

We have presented our stabilization method for a single pipe
applying an active control at an end of the pipe.
Some additional work is required
to extend this method to more complicated gas networks. For the
stabilization of networks it is often necessary to apply an active
control in the interior of the networks. The well-posedness of
systems of balance laws on networks is studied in \cite{GHKLS}. For
a star-shaped network of vibrating strings governed by the wave
equation, a method of boundary feedback stabilization is presented
in \cite{GS}, where not for each string an active control is
necessary. A related open problem is the feedback stabilization of
more complicated pipe networks with leaks. Moreover, also feedback
stabilization of second-order hyperbolic equations with time-delayed
controls is worth to be studied. For wave equations, this has been
done in \cite{G} and in \cite{NVF} and for the isothermal Euler
equations with an $L^2$-Lyapunov function in \cite{GD}.
In the current paper we have considered an ideal gas with constant
sound speed. It would  be interesting to look at
more realistic models of gas where the sound speed also depends on
the pressure.

\section*{Acknowledgments}
This work is  supported by DFG in the framework of the Collaborative Research Centre
CRC/Transregio 154, Mathematical Modelling, Simulation and Optimization Using the Example of Gas Networks, project C03 and project A03,
by the
DFG-Priority Program 1253: Optimization with partial differential
equation (grant number GU 376/7-1) and the Excellent Doctoral Research
Foundation for Key Subject of Fudan University (No. EHH1411208).
We thank the reviewers for the careful reading and
the comments that helped to improve the paper.


%
%

\end{document}